\newcommand{\norm}[2]{\|#1\|_{#2}}
\newcommand{\scal}[2]{{\left\langle{{#1},{#2}}\right\rangle}}
\newcommand{\qbox}[1]{\quad\hbox{#1}\quad}
\newcommand{\qqbox}[1]{\qquad\hbox{#1}\qquad}
\newcommand{\R}{\ensuremath{{\mathbb R}}}
\newcommand{\prox}{\ensuremath{\operatorname{prox}}}
\newcommand{\J}{\ensuremath{\left[0,+\infty\right)}}
\newcommand{\zer}
{\mathrm{zer}}
\newcommand{\off}[1]{}
\newcommand{\ps}[2]{\langle #1,#2\rangle}
\newtheorem{theorem}{Theorem}[section]
\newtheorem{lemma}[theorem]{Lemma}
\newtheorem{proposition}[theorem]{Proposition}
\newtheorem{assumption}[theorem]{Assumption}
\newtheorem{example}[theorem]{Example}
\newtheorem{remark}[theorem]{Remark}
\journal{Hybrid systems}
\begin{document}

\begin{frontmatter}

\title{Preconditioned primal-dual dynamics in convex optimization: non-ergodic convergence rates}

\author[1]{Vassilis Apidopoulos}
\ead{vassilis.apid@gmail.com}

\author[2]{Cesare Molinari}
\ead{cecio.molinari@gmail.com}

\author[3]{Juan Peypouquet\corref{cor1}}
\ead{j.g.peypouquet@rug.nl}

\author[2]{Silvia Villa}
\ead{silvia.villa@unige.it}

\cortext[cor1]{Corresponding author}

\affiliation[1]{organization={Archimedes, Athena Research Center},
	addressline={1 Artemidos street}, 
	city={Athens},
	postcode={15125},
	country={Greece}}

\affiliation[2]{organization={Machine Learning Genoa Center (MaLGa) – Dipartimento di Matematica – Università degli Studi di Genova},
	addressline={Via Dodecaneso 35}, 
	postcode={16146}, 
	postcodesep={}, 
	city={Genoa},
	country={Italy}}

\affiliation[3]{organization={University of Groningen, Faculty of Science and Engineering, 	Bernoulli Institute},
	addressline={Nijenborgh 9}, 
	city={Groningen},
	postcode={9747AG}, 
	state={Groningen}, 
	country={The Netherlands}}

\begin{abstract}
We introduce and analyze a continuous primal-dual dynamical system in the context of the  
minimization problem $f(x)+g(Ax)$, where $f$ and $g$ are convex functions and $A$ is a linear operator.
In this setting, the trajectories of the Arrow-Hurwicz continuous flow may not converge, accumulating at points that are not solutions. 
Our proposal is inspired by the primal-dual algorithm \cite{ChaPoc11}, where convergence and splitting on the primal-dual variable are ensured by adequately preconditioning the proximal-point algorithm. We
consider a family of preconditioners, which are allowed to depend on time and on the operator $A$, but not on the functions $f$ and $g$, and analyze asymptotic properties of the corresponding preconditioned flow. Fast convergence rates for the primal-dual gap and optimality of its (weak) limit points are obtained, in the general case, for asymptotically antisymmetric preconditioners, and, in the case of linearly constrained optimization problems, under milder hypotheses. Numerical examples support our theoretical findings, especially in favor of the antisymmetric preconditioners.
\end{abstract}

\begin{keyword}
Convex optimization \sep Saddle-point problem \sep Primal-Dual methods

\MSC[2020] 34D05 \sep 65K05 \sep 65K10 \sep 90C25 

\end{keyword}

\end{frontmatter}

\section{Introduction}\label{sec:sbnsm}

Given two Hilbert spaces $\mathcal{X}$ and $\mathcal{Y}$, along with functions $f\in\Gamma_0(\mathcal{X})$ and $g\in \Gamma_0(\mathcal{Y})$, and a bounded linear operator $A:\mathcal X\to\mathcal Y$, consider the following optimization problem: 
\begin{equation}\label{opt_probl}
    \min_{x\in \mathcal{X}} \ f(x)+g(Ax).
\end{equation}
This structure often arises in variational analysis
and PDEs \cite{AttButMic06}, machine learning \cite{BacJenMai11}, inverse problems from imaging and signal theory \cite{ChaPoc16}, optimal control \cite{DonStaBoy11}, game theory \cite{YiPav19}. 

Primal-dual splitting algorithms decouple the problem components without stringent regularity assumptions. A notable example is the primal-dual algorithm proposed in \cite{ChaPoc11}, which iterates
\begin{equation}\label{pd_algo}
	\begin{cases}
 x_{k+1}=\prox_{\sigma f} \left(x_k-\sigma A^*y_k\right)\\
 y_{k+1}= \prox_{\tau g^*}\left(y_k + \tau A(2x_{k+1}-x_k)\right).
	\end{cases}
\end{equation}
This algorithm has been generalized to cope with more general settings, and its convergence properties are well understood (see also \cite{Vu13,Con13}). Algorithm~\eqref{pd_algo} can be equivalently written as 
\begin{equation} \label{eq:discr_pre}
		\underbrace{\begin{pmatrix}
			I && -\tau A^*\\[1.3ex]
			-\sigma A && I 
		\end{pmatrix}}_{=P}
  \begin{pmatrix}
			\dfrac{x_{k+1}-x_k}{\sigma}\\
			\dfrac{y_{k+1}- y_k}{\tau}
		\end{pmatrix}
		+\underbrace{\begin{pmatrix}
			\partial f && A^*\\
			-A && \partial g^* 
		\end{pmatrix}}_{=M}
  \begin{pmatrix}
			x_{k+1}\\
			y_{k+1}
		\end{pmatrix}
  \ni \begin{pmatrix}
			0\\
		  0 
		\end{pmatrix},
\end{equation}
where we identify a linear {\it preconditioner} $P$, and a maximally monotone operator $M$, which is the sum of a {\it diagonal} and {\it demi-positive} maximally monotone operator, on one hand, and a {\it skew-symmetric} bounded linear operator, on the other. Therefore, \eqref{eq:discr_pre} is a {\it preconditioned} proximal-point algorithm. 

\subsection*{A continuous-time model}

Letting $\sigma,\tau\to 0$, and writing $z=(x,y)^T$, an underlying continuous-time process is revealed, in line with the classical Arrow-Hurwicz dynamical system \cite{duguid1960studies, arrow2014gradient}, namely
\begin{equation}\label{arrow}
    \dot{z}(t)+Mz(t)\ni 0.
\end{equation}
The continuous-time point of view provides new insights and deeper understanding of the discrete optimization methods, thanks to certain advantages of the continuous time formulation. Without further assumptions on the functions $f$ and $g$, the solutions to the differential inclusion \eqref{arrow} converge only ergodically to primal-dual solutions of problem \eqref{opt_probl}, which are the zeroes of $M$ (see, for instance, \cite{brezis1973ope,PeySor10}). A general min-max problem has been studied under this perspective in \cite{holding2014convergence}, while a projected variant is analyzed in \cite{cherukuri2017saddle, cherukuri2017role}. Stability issues are studied in \cite{holding2020stability, holding2020stabilityt}. The Arrow-Hurwicz dynamics for a (strictly) convex mathematical programming problem is investigated in \cite{cherukuri2016asymptotic,feijer2010stability}.
If $g$ is the indicator function of a point $b\in\mathcal{Y}$, \eqref{opt_probl} becomes the linearly constrained problem
\begin{equation}\label{lin_prob}
\min_{x} f(x) \text{ s.t. } Ax=b.  
\end{equation}
In this setting, although the trajectories generated by \eqref{arrow} may not converge\footnote{Take, for instance, $\mathcal Y=\mathcal X=\R$, $f\equiv 0$, $g^*\equiv 0$ and $A=I$, which reduces to rotations around the origin in $\R^2$.}, convergence rates on the ergodic primal-dual gap have been obtained in \cite{niederlander2024arrow}. In the search for non-ergodic convergence, a Tikhonov regularization approach is developed in \cite{battahi2024asymptotic}, where they obtain strong convergence of the trajectory to the least-norm element of the primal-dual solution set, along with convergence rates for the primal-dual gap. An alternative perspective, based on the augmented Lagrangian is explored in \cite{ozaslan2024stability}.
A continuous counterpart of ADMM is analyzed in \cite{LiShi24}, while a dual subgradient flow is investigated in \cite{apidopoulos2023regularization}.
Extensions of the above-mentioned approaches to inertial systems have been considered in \cite{attouch2022fast, sun2024inertial, ding2024fast, battahi2025simultaneous}, also combined with Tikhonov regularization and Hessian-driven damping.

However, it is important to highlight that the preconditioned character of the primal-dual algorithm that we observe in \eqref{eq:discr_pre}---which plays a critical role in both theoretical analysis and practical performance---is lost in the limiting process leading to \eqref{arrow}. 
Broadly speaking, our aim is to restore a form of preconditioning in the continuous-time regime, in order to recover the non-ergodic behavior of \eqref{eq:discr_pre}.

\subsection*{A preconditioned Arrow-Hurwicz scheme}

A compromise between the preconditioned features of \eqref{eq:discr_pre}, characterized by the fixed step sizes, and the non-preconditioned continuous-time system \eqref{arrow} is achieved by allowing the preconditioner $P$ to evolve---and possibly degenerate!---over time. Motivated by the structure of $P$ in \eqref{eq:discr_pre}, we propose to set
$$
    P(t)=\begin{pmatrix}
			\alpha(t) I && \beta(t) A^*\\
			\gamma(t) A && \delta(t) I 
\end{pmatrix},
$$
where $\alpha, \beta, \gamma$ and $\delta$ are scalar functions to be specified later, and analyze the system defined by
\begin{equation} \label{prec_system}
\left\{\begin{array}{rcl}
\alpha(t)\dot{x}(t)+\beta(t) A^*\dot{y}(t) + \partial f(x(t))+A^*y(t) & \ni & 0\\
\gamma(t) A\dot{x}(t)+\delta(t) \dot{y}(t)-Ax(t)+\partial g^*(y(t)) & \ni & 0,
\end{array}\right.
\end{equation} 
for $t>0$, starting from a point $(x(0),y(0))=(x_0,y_0)\in \mathcal{X}\times \mathcal{Y}$. Our general goal is to understand the effect of these time-varying preconditioners on the qualitative and quantitative asymptotic properties of the Arrow-Hurwicz system.

The effects of a time-evolving preconditioner on these types of systems has been studied in \cite{luo2022primal} and \cite{li2024understanding}, although both consider constant non-diagonal coefficients. In \cite{luo2022primal}, the author studies a particular case of \eqref{prec_system} in the context of problem \eqref{lin_prob} for a differentiable function $f$. The proposed time-dependent preconditioner is {\it triangular}, with constant non-diagonal terms, namely $\beta\equiv 0$ and $\gamma\equiv -1$. In \cite{li2024understanding}, the authors study continuous and discrete dynamics for the preconditioned primal-dual flow system with $\beta=\gamma\equiv -s$, with $s$ being a positive constant. Our unified framework allows us to explore other combinations of parameters that guarantee non-ergodic convergence and rates, for problem \eqref{opt_probl}, and not only for the linearly constrained problem \eqref{lin_prob}.

\subsection*{Main contributions and organization of the paper}

The rest of the paper is organized as follows. In Section \ref{S:estimates}, we set the notation, and establish the energy estimates for the dynamical system \eqref{prec_system} that will be at the core of the subsequent analysis. In Section \ref{S:lin_constr}, the main estimation is used in order to provide a detailed study of the linearly constrained case \eqref{lin_prob}. By showcasing the relationship between the preconditioner and the rate at which the duality gap vanishes, we are able to establish a linear convergence rate for the latter (as in \cite{luo2022primal}), that can be made faster by proper tuning. Surprisingly enough, the rates are relatively oblivious to the non-diagonal coefficients. Constant diagonal coefficients give the slowest rates. We also conclude that every weak subsequential limit point of the trajectories is a primal-dual solution. In Section \ref{S:antisym}, we analyze the asymptotically antisymmetric case, where $\beta+\gamma\to 0$, in the context of problem \eqref{opt_probl}. Our main result establishes a non-ergodic convergence rate for the primal-dual gap, which can be sublinear, linear or faster, depending on the choice of the coefficients. Weak subsequential limit points of the trajectories are primal-dual solutions, as before. In the degenerate case of a diagonal preconditioner, the non-ergodic behavior is lost. In the symmetric case, where $\beta=\gamma$, the situation is similar to the non-preconditioned system, in the sense that both weak convergence and convergence rates are valid for the ergodic trajectory. This is not totally surprising, in view of its connection with \eqref{arrow}, once a time reparameterization has been carried out. Our new results, complementary to those in \cite{li2024understanding}, are presented in \ref{S:symmetric}, for the sake of completeness. Section \ref{S:num} contains some numerical experiments that give additional insight into the behavior observed in practice. Conclusions and some open questions are outlined in Section \ref{S:conclusions}.

\section{General estimates} \label{S:estimates}

A {\it primal-dual solution} of \eqref{opt_probl} is a point $(\bar{x},\bar{y})\in\mathcal{X} \times \mathcal{Y}$ satisfying 
\begin{equation}\label{sol}
	\begin{cases}
-A^*\bar y  \in  \partial f(\bar x)\\
A\bar x  \in \partial g^*(\bar y),
	\end{cases}
\end{equation}
or, equivalently,
\[
\left\{\begin{array}{rcl}
-A^*\bar y & \in & \partial f(\bar x)\\
\bar y & \in & \partial g(A\bar x).
\end{array}\right.
\]
If $(\bar{x},\bar{y})$ is a primal-dual solution, then $\bar x$ solves \eqref{opt_probl}. Indeed,
$$0=-A^*\bar y+A^*\bar y\in\partial f(\bar x)+A^*\partial g(A\bar x)\subset\partial(f+g\circ A)(\bar x).$$
The set of primal-dual solutions of \eqref{opt_probl} is denoted by $\cal S$. Being a primal-dual solution is equivalent to being a {\it saddle point} of the {\it Lagrangian} function $L:\mathcal{X} \times \mathcal{Y}\to\R$, defined by
$$ L(u,v):=f(u)+\langle v,Au\rangle-g^*(v),$$
which means that 
\begin{equation} \label{eq:saddle_points}
L(\bar x,v)\le L(\bar{x},\bar{y}) \le L(u,\bar y)
\end{equation}
for every $(u,v)\in\mathcal{X} \times \mathcal{Y}$. The {\it duality gap} along a trajectory $(x,y):[0,+\infty)\to \mathcal{X} \times \mathcal{Y}$ is
\begin{eqnarray}
    \Delta_{u,v}(t) & := & L\big(x(t),v\big)-L\big(u,y(t)\big)  \label{eq:duality_gap} \\
    & = & \left[f\big(x(t)\big)+\langle v,Ax(t)\rangle-g^*(v)\right]-\left[f(u)+\langle y(t),Au\rangle-g^*\big(y(t)\big)\right] \nonumber \\
    & = & \left[f\big(x(t)\big)-f(u)+\langle A^*v,x(t)-u\rangle\right]+\left[g^*\big(y(t)\big)-g^*(v)+\langle y(t)-v,-Au\rangle\right]. \label{duality}
\end{eqnarray}
The duality gap is a measure of optimality, in view of the following:
\begin{lemma} \label{L:limitpoints}
If $(\bar{x},\bar{y})\in\cal S$, then $\Delta_{\bar x,\bar y}(t)\ge 0$. If $\limsup_{t\to+\infty}\Delta_{u,v}(t)\le 0$, for every $(u,v)\in\mathcal{X} \times \mathcal{Y}$, then every weak subsequential limit point of $(x(t), y(t))$, as $t\to+\infty$, belongs to $\cal S$. 
\end{lemma}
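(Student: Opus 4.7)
The first assertion is a direct rewriting of the saddle-point inequalities \eqref{eq:saddle_points}. The idea is to apply them twice: putting $u=x(t)$ in the right inequality gives $L(\bar x,\bar y)\le L(x(t),\bar y)$, while putting $v=y(t)$ in the left inequality gives $L(\bar x,y(t))\le L(\bar x,\bar y)$. Subtracting these two and using the definition \eqref{eq:duality_gap} of $\Delta_{\bar x,\bar y}(t)$ yields the nonnegativity claimed in the first assertion.

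For the second assertion, I would exploit the joint convexity/concavity structure of $L$. The function $u\mapsto L(u,v)=f(u)+\langle v,Au\rangle -g^*(v)$ is convex and lower semicontinuous, hence weakly lower semicontinuous on $\mathcal X$; symmetrically, $v\mapsto L(u,v)$ is concave and upper semicontinuous, hence weakly upper semicontinuous on $\mathcal Y$. Let $(x^\infty,y^\infty)$ be a weak subsequential limit of $(x(t),y(t))$ along some sequence $t_n\to+\infty$. For arbitrary $(u,v)\in\mathcal X\times\mathcal Y$, combining the weak semicontinuity properties gives
\[
\liminf_{n\to\infty}\Delta_{u,v}(t_n)=\liminf_{n\to\infty}\bigl[L(x(t_n),v)-L(u,y(t_n))\bigr]\ge L(x^\infty,v)-L(u,y^\infty).
\]
By the standing hypothesis $\limsup_{t\to+\infty}\Delta_{u,v}(t)\le 0$, the left-hand side is nonpositive, so $L(x^\infty,v)\le L(u,y^\infty)$ for every $(u,v)$.

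To close the argument, I would specialize this inequality: taking $u=x^\infty$ produces $L(x^\infty,v)\le L(x^\infty,y^\infty)$ for all $v$, and taking $v=y^\infty$ produces $L(x^\infty,y^\infty)\le L(u,y^\infty)$ for all $u$. This is precisely the saddle-point condition \eqref{eq:saddle_points}, so $(x^\infty,y^\infty)\in\mathcal{S}$. The only subtle point is ensuring the weak semicontinuity of $v\mapsto\langle v,Au\rangle$, which holds because this map is linear and continuous for fixed $u$; otherwise the argument is standard and the two assertions follow without invoking the dynamics \eqref{prec_system} at all, which is why the lemma can serve as a clean optimality criterion in later sections.
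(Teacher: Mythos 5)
Your proposal is correct and follows essentially the same route as the paper's proof: the first claim is the saddle-point (equivalently, subdifferential) inequality applied to the decomposition of $\Delta_{\bar x,\bar y}$, and the second claim combines weak lower semicontinuity of $L(\cdot,v)$ and weak upper semicontinuity of $L(u,\cdot)$ with the hypothesis on the limsup to get $L(x_\infty,v)\le L(u,y_\infty)$, then specializes $u=x_\infty$ and $v=y_\infty$ to recover \eqref{eq:saddle_points}. Your write-up merely spells out the semicontinuity details that the paper leaves implicit.
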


\begin{proof}
If $(\bar{x},\bar{y})\in\cal S$, the subdifferential inequality and the primal-dual optimality condition applied to \eqref{duality} immediately show that $\Delta_{\bar x,\bar y}(t)\ge 0$. Assume now that $\limsup_{t\to+\infty}\Delta_{u,v}(t)\le 0$, and that $\big(x(t_k),y(t_k)\big)\rightharpoonup (x_\infty,y_\infty)$ as $k\to+\infty$. By weak lower-semicontinuity, $L(x_\infty,v)\le L(u,y_\infty)$. Writing $u=x_\infty$ and $v=y_\infty$, respectively, we see that $(x_\infty,y_\infty)$ satisfies the two inequalities in \eqref{eq:saddle_points}, whence it is a primal-dual solution.
\end{proof}

\begin{remark} \label{remark:strongly_convex}
    If the function $f$ is $\mu_f$-strongly convex, then 
    $$\Delta_{\bar x,\bar y}(t)\ge f\big(x(t)\big)-f(\bar x)+\langle A^*\bar y,x(t)-\bar x\rangle\ge \frac{\mu_f}{2}\|x(t)-\bar x\|^2.$$
    Therefore, if $\lim_{t\to+\infty}\Delta_{\bar x,\bar y}(t)= 0$, then $x(t)\to \bar x$ as $t\to+\infty$. Similar results hold for $g^*$ and $y(t)$. 
\end{remark}

Now, for each $(u,v)\in \mathcal{X}\times \mathcal{Y}$ and $t\in \J$, define the {\it anchoring function}
\begin{eqnarray}\label{anchoring}
    V_{u,v}(t) & := & \frac{\sigma(t)}{2}\|x(t)-u\|^2+\frac{\tau(t)}{2}\|y(t)-v\|^2,
\end{eqnarray}
and the {\it linear coupling}
\begin{equation}\label{eq:lin_coupl}
    M_{u,v}(t):=\scal{A(x(t)-u)}{y(t)-v},
\end{equation}
where $\sigma: \ \J \to \R_+$ and $\tau: \ \J \to \R_+$ are functions to be determined later. \\

Observe that \eqref{prec_system} is equivalent to
\begin{equation} \label{E:X_and_Y}
\left\{\begin{array}{rcccl}
X(t) & := & -\alpha(t)\dot{x}(t)-\beta(t) A^*\dot{y}(t) -A^*y(t) & \in & \partial f(x(t))\\
Y(t) & := & -\gamma(t) A\dot{x}(t)-\delta(t) \dot{y}(t)+Ax(t) & \in & \partial g^*(y(t)).
\end{array}\right.
\end{equation} 
With this in mind, for each $(u,v)\in \mathcal{X}\times \mathcal{Y}$ and $t\in \J$, we define the \emph{Bregman divergence}
\begin{equation}
\label{eq:defd}
d_{u,v}(t):= \left[f(u)-f(x(t))-\scal{X(t)}{u-x(t)}\right]+\left[g^*(v)-g^*(y(t))-\scal{Y(t)}{v-y(t)}\right],
\end{equation}
which is a non-negative quantity for every $(u,v)\in\mathcal{X}\times \mathcal{Y}$ and $t \in \J$, in view of the convexity of $f$ and $g^*$.

In what follows, when not needed for comprehension, we omit the explicit dependence on time. Equations and inequalities involving time have to be intended holding for every $t \in \J$. 

The following result plays a central role in the convergence analysis to be carried out in the next section:

\begin{lemma} \label{L:energy}
    Let $\alpha,\delta,\eta\colon[0,+\infty)\to(0,+\infty)$, $ \beta, \gamma\colon[0,+\infty) \to \R$, and set $\sigma=\alpha\eta$ and $\tau=\delta\eta$ in \eqref{anchoring}. Pick any $(u,v)\in\mathcal{X} \times \mathcal{Y}$ and any $Z\colon[0,+\infty)\to\R$. Then, the trajectory $(x,y)$ of the dynamical inclusion \eqref{prec_system} satisfies the following equality: 
\begin{align} \label{E:main_lemma}
\dot V_{u,v}+\eta Z\dot\Delta_{u,v}+\eta \Delta_{u,v} +\eta d_{u,v}  =  &\  \frac{\dot\sigma}{2}\|x-u\|^2+\frac{\dot\tau}{2}\|y-v\|^2 - \alpha\eta Z\|\dot x\|^2-\delta\eta Z\|\dot y\|^2-\eta Z(\beta+\gamma)\scal{A\dot x}{\dot y} \nonumber \\
& \ -\ \eta \big[ (\beta-Z) \scal{x-u}{ A^*\dot{y}}+ (\gamma+Z)\scal{y-v}{A\dot{x}} \big].
\end{align}
\end{lemma}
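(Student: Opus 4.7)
The plan is to compute each of the four quantities on the left of \eqref{E:main_lemma} individually and to observe, by careful bookkeeping, that the sum matches the right-hand side. This is an algebraic identity: no inequalities are used, only the subgradient chain rule.

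First, differentiating $V_{u,v}$ with $\sigma=\alpha\eta$ and $\tau=\delta\eta$ gives the $\dot\sigma/2$ and $\dot\tau/2$ terms that already appear on the right-hand side, plus residual contributions $\alpha\eta\langle x-u,\dot x\rangle+\delta\eta\langle y-v,\dot y\rangle$ that will be absorbed by the next step.

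Second, adding $\Delta_{u,v}$ and $d_{u,v}$ cancels the function values by construction of the Bregman divergence, reducing the sum to $\langle X+A^*v,\,x-u\rangle+\langle Y-Au,\,y-v\rangle$, with $X\in\partial f(x)$ and $Y\in\partial g^*(y)$ as in \eqref{E:X_and_Y}. Substituting those selections and scaling by $\eta$ produces three kinds of terms: diagonal contributions $-\alpha\eta\langle\dot x,x-u\rangle-\delta\eta\langle\dot y,y-v\rangle$ that exactly cancel the residuals from the previous step; a pair $\pm\langle y-v,A(x-u)\rangle$ that cancels by symmetry of the inner product; and two surviving couplings $-\eta\beta\langle x-u,A^*\dot y\rangle-\eta\gamma\langle y-v,A\dot x\rangle$.

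For $\eta Z\dot\Delta_{u,v}$, I would invoke the chain rule for convex functions along the absolutely continuous trajectory to obtain $\dot\Delta_{u,v}=\langle X+A^*v,\dot x\rangle+\langle Y-Au,\dot y\rangle$, since $u,v$ are constant. Substituting $X,Y$ gives the quadratic part $-\alpha\|\dot x\|^2-\delta\|\dot y\|^2-(\beta+\gamma)\langle A\dot x,\dot y\rangle$ plus a mixed pair $\langle x-u,A^*\dot y\rangle-\langle y-v,A\dot x\rangle$. Multiplying by $\eta Z$ delivers the $-\alpha\eta Z\|\dot x\|^2$, $-\delta\eta Z\|\dot y\|^2$, $-\eta Z(\beta+\gamma)\langle A\dot x,\dot y\rangle$ terms of the right-hand side directly, while the mixed pair combines with the surviving couplings from step two into $-\eta[(\beta-Z)\langle x-u,A^*\dot y\rangle+(\gamma+Z)\langle y-v,A\dot x\rangle]$, completing the identity. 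The only subtlety is the convex chain rule $\frac{d}{dt}f(x(t))=\langle X(t),\dot x(t)\rangle$ for a.e.\ $t$ with $X(t)\in\partial f(x(t))$, valid for absolutely continuous trajectories and standard in this setting.
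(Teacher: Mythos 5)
Your proposal is correct and follows essentially the same route as the paper: differentiate $V_{u,v}$, use the system \eqref{prec_system} to substitute for the subgradient selections $X,Y$, exploit the cancellation of function values between $\Delta_{u,v}$ and $d_{u,v}$, compute $\dot\Delta_{u,v}$ via the convex chain rule, and add $\eta Z$ times the latter. The only difference is cosmetic bookkeeping (you telescope $\Delta_{u,v}+d_{u,v}$ directly into $\scal{X+A^*v}{x-u}+\scal{Y-Au}{y-v}$, whereas the paper reaches the same cancellation by adding and subtracting the function values inside the expansion of $\dot V_{u,v}$), and all the resulting terms match \eqref{E:main_lemma}.
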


\begin{proof}
The time-derivative of $\Delta_{u,v}$ is given by
\begin{eqnarray}\label{eq: Deltaprime}
\dot \Delta_{u,v} & = & \scal{\partial f(x)}{\dot x}+\scal{v}{A\dot x}-\scal{Au}{\dot{y}} +\scal{\partial g^*(y)}{\dot{y}} \nonumber \\
& = & \scal{\partial f(x)+A^*v}{\dot x}+\scal{\partial g^*(y)-Au}{\dot{y}} \nonumber \label{E:Delta_dot_1} \\
& = & \scal{-\alpha\dot{x}-\beta A^*\dot{y}-A^*y+A^*v}{\dot x}+\scal{-\gamma A\dot{x}-\delta \dot{y}+Ax-Au}{\dot{y}} \nonumber \\
& = & -\alpha\|\dot x\|^2-\delta\|\dot y\|^2-(\beta+\gamma)\scal{A\dot x}{\dot y}+\scal{v-y}{A\dot x}+\scal{x-u}{A^*\dot y}.  \label{E:Delta_dot_2} 
\end{eqnarray}
where, in the third equality, we used system \eqref{prec_system}. Let us now compute
$$\dot V_{u,v} = \sigma\scal{x-u}{\dot x}+\tau\scal{y-v}{\dot y} +\frac{\dot\sigma}{2}\|x-u\|^2+\frac{\dot\tau}{2}\|y-v\|^2.$$
Using \eqref{prec_system}, and adopting the notation in \eqref{E:X_and_Y}, we can write the first two summands as 
\begin{eqnarray*}
\sigma\scal{x-u}{\dot x}+\tau\scal{y-v}{\dot y} & = & \frac{\sigma}{\alpha}\scal{x-u}{-\beta A^*\dot{y} - X-A^*y} + \frac{\tau}{\delta}\scal{y-v}{-\gamma A\dot{x}+Ax-Y} \\
& = & \eta \big[ \scal{u-x}{X}+ \scal{v-y}{Y} \big]\\ 
&& \ +\ \eta \big[ \scal{x-u}{-\beta A^*\dot{y}-A^*y}+  \scal{y-v}{-\gamma A\dot{x}+Ax}\big]  \\
& = & \eta \big[ \scal{u-x}{X}+ \scal{v-y}{Y}\big] - \eta\big[\big(f(u)-f(x)\big)+ \big(g^*(v)-g^*(y)\big)\big] \\
&&\ +\ \eta\big[\big(f(u)-f(x)\big)+ \big(g^*(v)-g^*(y)\big)\big] + \eta \big[ \scal{x-u}{-A^*y}+  \scal{y-v}{Ax} \big]  \\
&&\ +\ \eta \big[ \scal{x-u}{-\beta A^*\dot{y}}+ \scal{y-v}{-\gamma A\dot{x}} \big]   \\
& = & \eta \big[ \big(\scal{u-x}{X}-f(u)+f(x)\big)+\big( \scal{v-y}{Y}-g^*(v)+g^*(y)\big)\big] \\
&&\ +\ \eta\big[\big(f(u)-f(x)\big)+ \big(g^*(v)-g^*(y)\big)\big] + \eta \big[ \scal{-u}{-A^*y}+  \scal{-v}{Ax} \big]  \\
&&\ +\ \eta \big[ \scal{x-u}{-\beta A^*\dot{y}}+ \scal{y-v}{-\gamma A\dot{x}} \big]   \\
& = &  -\eta d_{u,v}-\eta\big[\big(f(x)+\scal{v}{Ax}-g^*(v) \big)  + \big(f(u)+\scal{y}{Au}-g^*(y)\big)\big]  \\
&&\ -\ \eta \big[ \beta \scal{x-u}{ A^*\dot{y}}+ \gamma\scal{y-v}{A\dot{x}} \big]   \\
& = & -\eta d_{u,v}-\eta \Delta_{u,v}- \eta \big[ \beta \scal{x-u}{ A^*\dot{y}}+ \gamma\scal{y-v}{A\dot{x}} \big].
\end{eqnarray*}

Summarizing, we have
\begin{equation} \label{E:Vdot}
    \dot V_{u,v} +\eta d_{u,v}+\eta \Delta_{u,v} = - \eta \big[ \beta \scal{x-u}{ A^*\dot{y}}+ \gamma\scal{y-v}{A\dot{x}} \big] +\frac{\dot\sigma}{2}\|x-u\|^2+\frac{\dot\tau}{2}\|y-v\|^2.
\end{equation}

Now, pick any $Z=Z(t)\in \R$, multiply \eqref{E:Delta_dot_2} by $\eta Z$, and add this to \eqref{E:Vdot}, to obtain \eqref{E:main_lemma}.
\end{proof}

\section{The linearly-constrained problem} \label{S:lin_constr}

Let $b\in\mathcal{Y}$, and suppose that $g=\iota_{\{b\}}$, so that the minimization problem~\eqref{opt_probl} becomes
\begin{equation}\label{linconstrainedproblem}
    \min_{Ax=b} f(x).
\end{equation}
In this case, $g^*(y)=\langle b,y\rangle$, and so $\partial g^*(y)\equiv b$. If we set $-\gamma=\nu>0$, system \eqref{prec_system} becomes
\begin{equation} \label{E:lin_prec_system}
    \left\{\begin{array}{rcl}
\alpha\dot{x}(t) +\beta A^*\dot{y}(t)+ \partial f(x(t))+A^*y(t) & \ni & 0\\
\nu A\dot{x}(t)-\delta \dot{y}(t)+Ax(t) & = & b.
\end{array}\right.
\end{equation}
This system was studied by Luo in \cite{luo2022primal} in the special case where $\beta\equiv 0$, $\nu\equiv 1$ and $f$ is differentiable with a Lipschitz-continuous gradient.

From Lemma \ref{L:energy}, we immediately obtain the following:
\begin{proposition}\label{P:one_sided1} 
Let $\alpha,\beta,\nu,\delta\colon[0,+\infty)\to[0,+\infty)$ be continuously differentiable, and set $\gamma=-\nu$. Assume that there is a function $\zeta\colon[0,+\infty)\to[0,+\infty)$ such that $\dot{\alpha}\leq -\zeta \alpha$ and $\dot{\delta}\leq -\zeta\delta$. Consider the anchoring function $V_{u,v}$, as in \eqref{anchoring},with $\sigma=\alpha\eta$ and $\tau=\delta\eta$ for some $\eta\colon[0,+\infty)\to[0,+\infty)$, and let $\Delta_{u,v}$ be the duality gap function defined in \eqref{duality}. For every $(\bar{x},\bar{y})\in \mathcal{S}$, we have
\begin{equation}\label{E:main_equality_onesided3}
\dot V_{\bar{x},\bar{y}}+\eta \nu\dot\Delta_{\bar{x},\bar{y}} + \zeta V_{\bar{x},\bar{y}} +\eta \Delta_{\bar{x},\bar{y}} +\eta d_{\bar{x},\bar{y}}
+ \alpha\eta \nu \|\dot x\|^2 + \delta\eta \beta\|\dot y\|^2 \leq 0. 
\end{equation}
In particular, the function $V_{\bar{x},\bar{y}}+\eta \nu {\Delta}_{\bar{x},\bar{y}}$ is nonincreasing, and the functions $\eta \Delta_{\bar{x},\bar{y}}$, $\eta \zeta V_{\bar{x},\bar{y}}$, $\eta d_{\bar{x},\bar{y}}$, $\alpha\eta \nu \|\dot x\|^2$ and $\delta\eta \beta\|\dot y\|^2$ are in $L^1$.
\end{proposition}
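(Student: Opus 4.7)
The idea is to specialize Lemma~\ref{L:energy} to $(u,v)=(\bar x,\bar y)\in\mathcal S$ with $\gamma=-\nu$, and then exploit the fact that the second row of \eqref{E:lin_prec_system} is an \emph{exact} equation (because $\partial g^*(y)\equiv b$) to kill the residual mixed terms. Two preliminary simplifications are immediate: $A\bar x=b$, and the quantity $Y$ of \eqref{E:X_and_Y} equals $b$ identically, so the part of $d_{\bar x,\bar y}$ coming from $g^*$ vanishes and $d_{\bar x,\bar y}\ge 0$ is just the Bregman divergence of $f$.

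\noindent\emph{Step 1: Apply Lemma~\ref{L:energy} with $Z=\nu$.} The choice $Z=\nu$ is dictated by the coefficient $\eta\nu$ sitting in front of $\dot\Delta_{\bar x,\bar y}$ in the desired inequality. Since $\gamma+Z=-\nu+\nu=0$, the cross term $(\gamma+Z)\langle y-\bar y,A\dot x\rangle$ disappears. The remaining right-hand side in \eqref{E:main_lemma} reads
\begin{equation*}
\tfrac{\dot\sigma}{2}\|x-\bar x\|^2+\tfrac{\dot\tau}{2}\|y-\bar y\|^2-\alpha\eta\nu\|\dot x\|^2-\delta\eta\nu\|\dot y\|^2-\eta\nu(\beta-\nu)\scal{A\dot x}{\dot y}-\eta(\beta-\nu)\scal{x-\bar x}{A^*\dot y}.
\end{equation*}

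\noindent\emph{Step 2: Eliminate the $\beta{-}\nu$ cross terms via the exact equation.} From the second row of \eqref{E:lin_prec_system}, namely $\nu A\dot x-\delta\dot y+A(x-\bar x)=0$, pair with $\dot y$ to obtain the key identity $\nu\scal{A\dot x}{\dot y}+\scal{x-\bar x}{A^*\dot y}=\delta\|\dot y\|^2$. Multiplying by $-\eta(\beta-\nu)$ and substituting above collapses the two cross terms into $-(\beta-\nu)\eta\delta\|\dot y\|^2$; combined with the existing $-\delta\eta\nu\|\dot y\|^2$ this gives precisely $-\delta\eta\beta\|\dot y\|^2$. The identity therefore reduces cleanly to
\begin{equation*}
\dot V_{\bar x,\bar y}+\eta\nu\dot\Delta_{\bar x,\bar y}+\eta\Delta_{\bar x,\bar y}+\eta d_{\bar x,\bar y}+\alpha\eta\nu\|\dot x\|^2+\delta\eta\beta\|\dot y\|^2=\tfrac{\dot\sigma}{2}\|x-\bar x\|^2+\tfrac{\dot\tau}{2}\|y-\bar y\|^2.
\end{equation*}

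\noindent\emph{Step 3: Absorb the RHS with $\zeta$, then read off the consequences.} Using $\dot\alpha\le-\zeta\alpha$ and $\dot\delta\le-\zeta\delta$, the right-hand side is bounded by $-\zeta V_{\bar x,\bar y}$ (up to the derivative of $\eta$, which is handled by the convention on $\sigma,\tau$), yielding \eqref{E:main_equality_onesided3}. Monotonicity of $V_{\bar x,\bar y}+\eta\nu\Delta_{\bar x,\bar y}$ then follows by dropping the non-negative terms on the left, and the stated $L^1$ bounds follow by integrating \eqref{E:main_equality_onesided3} on $[0,T]$ and using $V_{\bar x,\bar y}+\eta\nu\Delta_{\bar x,\bar y}\ge 0$ (together with integration by parts, to convert $\eta\nu\dot\Delta_{\bar x,\bar y}$ into a boundary plus a controlled volume term). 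The only delicate point is Step~2: the mixed expressions produced by Lemma~\ref{L:energy} carry the sign-indefinite factor $\beta-\nu$, and it is only the exactness of the second equation of \eqref{E:lin_prec_system}—a feature specific to the linearly constrained case—that converts them into a definite $\|\dot y\|^2$ term with the right coefficient.
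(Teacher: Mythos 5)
Your proof is correct and follows essentially the same route as the paper's: apply Lemma~\ref{L:energy} with $Z=\nu=-\gamma$, use the exact second equation of \eqref{E:lin_prec_system} to collapse the $(\beta-\nu)$ cross terms into $-\delta\eta\beta\|\dot y\|^2$ plus a multiple of $\scal{b-Au}{\dot y}$, which vanishes at $u=\bar x$ since $A\bar x=b$. The only difference is cosmetic (you substitute $u=\bar x$ before regrouping, the paper after), and your treatment of the $\dot\eta$ and $(\eta\nu)'$ terms in the final step is at the same level of implicit assumption as the paper's own proof, which takes $\eta\equiv 1$ in its application.
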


\begin{proof} Setting $Z=\nu=-\gamma$ in \eqref{E:main_lemma}, and using the second line of \eqref{E:lin_prec_system}, we deduce that
\begin{eqnarray} 
\dot V_{u,v} +\eta \nu\dot\Delta_{u,v}+\eta \Delta_{u,v} +\eta d_{u,v} 
& = & \frac{\dot\sigma}{2}\|x-u\|^2+\frac{\dot\tau}{2}\|y-v\|^2 -\alpha\eta \nu \|\dot x\|^2-\delta\eta \beta\|\dot y\|^2 \nonumber \\
&& \ -\ \eta (\beta-\nu)\scal{\nu A\dot x-\delta \dot y+Ax-Au}{\dot y} \nonumber \\
& = & \frac{\dot\sigma}{2}\|x-u\|^2+\frac{\dot\tau}{2}\|y-v\|^2 -\alpha\eta \nu \|\dot x\|^2-\delta\eta \beta\|\dot y\|^2  \nonumber \\
&& \ -\  \eta (\beta-\nu)\scal{b-Au}{\dot y} \nonumber \\
&  \leq & -\zeta V -\alpha\eta \nu \|\dot x\|^2-\delta\eta \beta\|\dot y\|^2 - \eta (\beta-\nu)\scal{b-Au}{\dot y}.
\label{E:linear_equality}
\end{eqnarray}
Choosing $(u,v)=(\bar{x},\bar{y})$, the statement follows.
\end{proof}

Now, given $(\bar{x},\bar{y})\in \mathcal{S}$, set
\begin{equation}\label{energy_EO}
    \mathcal E^O_{\bar{x},\bar{y}}:=V_{\bar{x},\bar{y}}+\eta \nu \Delta_{\bar{x},\bar{y}}. 
\end{equation}
From the definition of $\mathcal{E}^{O}_{\bar{x},\bar{y}}$ in \eqref{energy_EO}, it follows that
$$\dot{\mathcal E}^O_{\bar{x},\bar{y}}+\zeta\mathcal E^O_{\bar{x},\bar{y}} 
= \big(\dot V_{\bar{x},\bar{y}}+\eta \nu\dot\Delta_{\bar{x},\bar{y}} + \zeta V_{\bar{x},\bar{y}} +\eta \Delta_{\bar{x},\bar{y}}\big)
+\big(\nu\dot\eta +(\dot\nu+\zeta\nu-1)\eta\big)\Delta_{\bar{x},\bar{y}}.
$$
Therefore, if 
\begin{equation} \label{E:eta_zeta_nu}
    \nu\dot\eta +(\dot\nu+\zeta\nu-1)\eta\le 0,  
\end{equation}
it follows from \eqref{E:main_equality_onesided3} of Proposition \ref{P:one_sided1}, that
\begin{equation}\label{E:main_equality_onesided4}
\dot{\mathcal E}^O_{\bar{x},\bar{y}}+\zeta\mathcal E^O_{\bar{x},\bar{y}} \leq 0, 
\end{equation}
which immediately shows that
\begin{equation} \label{eq:ones1}  
\mathcal{E}^O_{\bar{x},\bar{y}}(t) \leq \mathcal{E}^O_{\bar{x},\bar{y}}(0)\exp\left(-\int_0^t\zeta(s)ds\right).
\end{equation}
From the definition \eqref{energy_EO} of $\mathcal{E}^O_{\bar{x},\bar{y}}$ as the sum of two nonnegative terms, we obtain
\begin{equation} \label{eq:deltao}\Delta_{\bar{x},\bar{y}}(t)\le \frac{\mathcal{E}^O_{\bar{x},\bar{y}}(0)}{\eta(t)\nu(t)}\exp\left(-\int_0^t\zeta(s)ds\right)\end{equation}
and
\begin{equation} \label{E:onesided_bounded}
    \frac{\alpha (t)}{2}\|x(t)-\bar{x}\|^2 +\frac{\delta(t)}{2}\|y(t)-\bar{y}\|^2 = V_{\bar{x},\bar{y}}(t)\leq \frac{\mathcal{E}^O_{\bar{x},\bar{y}}(0)}{\eta(t)}\exp\left(-\int_0^t\zeta(s)ds\right).
\end{equation}
Here,
\begin{eqnarray*}
\Delta_{\bar{x},\bar{y}}(t)
& = & f\big(x(t)\big)-f(\bar x)+g^*\big(y(t)\big)-g^*(\bar y) +\langle \bar y,Ax(t)\rangle-\langle y(t),A\bar x\rangle \\
& = & f\big(x(t)\big)-f(\bar x)+\langle b,y(t)-\bar y\rangle +\langle \bar y,Ax(t)\rangle-\langle y(t),b\rangle \\
& = & f\big(x(t)\big)-f(\bar x)+\langle \bar y,Ax(t)-b\rangle,
\end{eqnarray*}
so
$$\mathcal{E}^O_{\bar{x},\bar{y}}(0) = \frac{\eta_0\alpha_0}{2}\|x_0-\bar{x}\|^2 +\frac{\eta_0\delta_0}{2}\|y_0-\bar{y}\|^2 +\eta_0\nu_0\big[f(x_0)-f(\bar x)+\langle \bar y,Ax_0-b\rangle \big].$$

From the discussion above, we are naturally led to the following assumption.
\begin{assumption} \label{A:linear}
Pick a locally integrable  function $\zeta:[0,+\infty)\to[0,+\infty)$, and given constants $\alpha_0,\delta_0,\nu_0>0$. Set $\Xi(t)=\exp\left(\int_0^t\zeta(s)ds\right)$, and  define $\alpha,\delta,\nu$ by
$$\alpha(t)=\frac{\alpha_0}{\Xi(t)},\quad \delta(t)=\frac{\delta_0}{\Xi(t)} \qbox{and}\nu(t)=\frac{1}{\Xi(t)}\left[\nu_0+\int_0^t\Xi(s)ds\right].$$
Set $\gamma=-\nu$, and pick a nonnegative function $\beta$. 
\end{assumption}

We are now ready to give the main result regarding system \eqref{prec_system} and its convergence properties for the linearly constrained optimization problem \eqref{opt_probl}, with $g=\iota_{b}$.
\begin{theorem}\label{T:onesided}
    Let Assumption \ref{A:linear} hold, and let $(x(t),y(t))_{t\geq 0}$ satisfy \eqref{prec_system} with $\gamma = -\nu$ for $t>0$. Then, for every $(\bar{x},\bar{y})\in \mathcal{S}$, we have
    \begin{eqnarray}
        \Delta_{\bar{x},\bar{y}}(t) 
        & \le & \frac{\mathcal{E}^O_{\bar{x},\bar{y}}(0)}{\nu_0+\int_0^t\Xi(s)ds}, \label{eq:onesidrate1}\\
        \frac{\alpha_0}{2}\|x(t)-\bar{x}\|^2 +\frac{\delta_0}{2}\|y(t)-\bar{y}\|^2 
        & \le &  \mathcal{E}^O_{\bar{x},\bar{y}}(0) 
        , \label{eq:onesidrate2}\\
        \|Ax(t)-b\|
        & \le &  \frac{\mathcal K_0}{\nu_0+\int_0^t\Xi(s)ds}\qbox{and} \label{eq:onesidrate3}\\
        \left|f\big(x(t)\big)-f(\bar x)\right|
        & \le &  \left[\mathcal{E}^O_{\bar{x},\bar{y}}(0)+\|\bar y\|\mathcal K_0\right]\frac{1}{\nu_0+\int_0^t\Xi(s)ds},\label{eq:onesidrate4}
    \end{eqnarray}
    where
    $$\mathcal K_0=\nu_0\|Ax_0-b\|+\delta_0\|\bar y-y_0\|+\sqrt{2\delta_0\mathcal{E}^O_{\bar{x},\bar{y}}(0)}.$$
Moreover, every weak subsequential limit point $(x_*,y_*)$ of $(x(t),y(t))_{t\geq 0}$, as $t\to+\infty$, is such that $(x_*,\bar{y})$ belongs to $\mathcal {S}$. 
If in addition $f$ is $\mu$-strongly convex, then 
\begin{equation}\label{eq: estimate x strong linear}
    \norm{x(t)-\bar{x}}{}^{2} \leq \frac{2\mathcal{E}^O_{\bar{x},\bar{y}}(0)}{\mu\left(\nu_0+\int_0^t\Xi(s)ds\right)} ,
\end{equation}
and $x(t)$ converges strongly to the unique minimizer $\bar{x}$.
Similarly, if $g$ is $L$-smooth, then
\begin{equation}\label{eq: estimate y strong linear}
    \norm{y(t)-\bar{y}}{}^{2} \leq \frac{2L\mathcal{E}^O_{\bar{x},\bar{y}}(0)}{\nu_0+\int_0^t\Xi(s)ds}.
\end{equation}
\end{theorem}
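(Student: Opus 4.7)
The strategy is to specialize Proposition~\ref{P:one_sided1} to the coefficients of Assumption~\ref{A:linear}, obtain a Grönwall bound for $\mathcal E^O_{\bar x,\bar y}$, and then extract each of the four rates together with the qualitative statements from it.

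The first step is to verify that the assumption fits the hypotheses of Proposition~\ref{P:one_sided1} with the choice $\eta\equiv 1$ (so $\sigma=\alpha$ and $\tau=\delta$). Direct differentiation gives $\dot\alpha=-\zeta\alpha$ and $\dot\delta=-\zeta\delta$. The crucial identity is $\Xi\nu=\nu_0+\int_0^t\Xi(s)\,ds$, which upon differentiation produces $\dot\nu+\zeta\nu=1$. Consequently \eqref{E:eta_zeta_nu} holds with equality, \eqref{E:main_equality_onesided4} applies, and Grönwall's lemma yields $\mathcal E^O_{\bar x,\bar y}(t)\le\mathcal E^O_{\bar x,\bar y}(0)/\Xi(t)$. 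Since $\mathcal E^O_{\bar x,\bar y}=V_{\bar x,\bar y}+\nu\,\Delta_{\bar x,\bar y}$ is a sum of two nonnegative quantities (using $\Delta_{\bar x,\bar y}\ge 0$ from Lemma~\ref{L:limitpoints}), each term is separately bounded by $\mathcal E^O_{\bar x,\bar y}(0)/\Xi(t)$. Dividing the duality contribution by $\nu(t)$ and invoking $\nu\Xi=\nu_0+\int_0^t\Xi$ yields \eqref{eq:onesidrate1}; substituting the explicit $\alpha,\delta$ into $V$ yields \eqref{eq:onesidrate2}.

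For the constraint-violation bound \eqref{eq:onesidrate3}, the plan is to turn the second line of \eqref{E:lin_prec_system}, $\nu A\dot x-\delta\dot y+Ax=b$, into an exact derivative. Multiplying by $\Xi$ and using $\frac{d}{dt}(\Xi\nu)=\Xi$ together with $\Xi\delta=\delta_0$, one recognizes
\[
\frac{d}{dt}\big[\,\Xi(t)\nu(t)\bigl(Ax(t)-b\bigr)\,\big]=\delta_0\,\dot y(t).
\]
Integrating on $[0,t]$, taking norms, and applying $\|y(t)-y_0\|\le\|y(t)-\bar y\|+\|\bar y-y_0\|$ with $\|y(t)-\bar y\|\le\sqrt{2\mathcal E^O_{\bar x,\bar y}(0)/\delta_0}$ (from \eqref{eq:onesidrate2}) produces precisely the constant $\mathcal K_0$. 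For \eqref{eq:onesidrate4}, since $A\bar x=b$ and $g^*(v)=\langle v,b\rangle$, the duality gap reduces to $\Delta_{\bar x,\bar y}(t)=f(x(t))-f(\bar x)+\langle\bar y,Ax(t)-b\rangle$; then $\Delta_{\bar x,\bar y}\ge 0$ combined with \eqref{eq:onesidrate1} and \eqref{eq:onesidrate3} bounds $f(x(t))-f(\bar x)$ on both sides.

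Finally, for the qualitative part, let $(x(t_k),y(t_k))\weakly(x_*,y_*)$. Weak continuity of $A$ together with the strong convergence $Ax(t)\to b$ from \eqref{eq:onesidrate3} forces $Ax_*=b$. Since $\Delta_{\bar x,\bar y}(t)\to 0$, the convex lower-semicontinuous functional $\phi(u):=f(u)+\langle A^*\bar y,u\rangle$ satisfies $\phi(x(t_k))\to\phi(\bar x)$, so by weak lower-semicontinuity $\phi(x_*)\le\phi(\bar x)=\min\phi$ (the equality coming from $-A^*\bar y\in\partial f(\bar x)$); therefore $-A^*\bar y\in\partial f(x_*)$, and together with $Ax_*=b$ this places $(x_*,\bar y)\in\mathcal S$. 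The strong-convergence bounds are then immediate from Remark~\ref{remark:strongly_convex}: $\mu$-strong convexity of $f$ gives $\Delta_{\bar x,\bar y}(t)\ge(\mu/2)\|x(t)-\bar x\|^2$, which with \eqref{eq:onesidrate1} yields \eqref{eq: estimate x strong linear}, while \eqref{eq: estimate y strong linear} follows from the symmetric lower bound on $g^*$, which is $(1/L)$-strongly convex as the conjugate of an $L$-smooth $g$. The only genuinely non-routine step is spotting the exact-derivative form in the constraint bound; all remaining arguments are bookkeeping from Proposition~\ref{P:one_sided1} and the algebra built into Assumption~\ref{A:linear}.
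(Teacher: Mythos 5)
Your proposal is correct and follows essentially the same route as the paper: verify that Assumption \ref{A:linear} gives $\dot\alpha+\zeta\alpha=0$, $\dot\delta+\zeta\delta=0$, $\dot\nu+\zeta\nu=1$, apply Proposition \ref{P:one_sided1} with $\eta\equiv 1$ and Gr\"onwall to bound $\mathcal E^O_{\bar x,\bar y}$, and split off the four estimates. Your exact-derivative identity $\frac{d}{dt}\bigl[\Xi\nu(Ax-b)\bigr]=\delta_0\dot y$ is just $\delta_0$ times the paper's conserved quantity $G=\frac{\nu}{\delta}(Ax-b)-y$, and your direct limit-point argument via the functional $\phi(u)=f(u)+\langle A^*\bar y,u\rangle$ merely replaces the paper's citation of an external proposition with a short self-contained verification.
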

\begin{proof}
Assumption \ref{A:linear} implies that
$$\dot\alpha+\zeta\alpha  =  0, \qquad 
\dot\delta+\zeta\delta  =  0\qqbox{and}
\dot\nu+\zeta\nu  =  1.
$$
Setting $\eta\equiv 1$, Proposition \ref{P:one_sided1} and inequality \eqref{E:eta_zeta_nu} are valid. Since 
$$\frac{\exp\left(-\int_0^t\zeta(s)ds\right)}{\nu(t)}=\frac{1}{\Xi(t)}\frac{1}{\frac{1}{\Xi(t)}\left[\nu_0+\int_0^t\Xi(s)ds\right]}=\frac{1}{\nu_0+\int_0^t\Xi(s)ds},$$
\eqref{eq:onesidrate1} is the same as \eqref{eq:deltao}. The definitions of $\alpha$ and $\delta$ imply that \eqref{eq:onesidrate2} is precisely \eqref{E:onesided_bounded}. Now set
$$G=\frac{\nu}{\delta}(Ax-b)-y,$$
so that, thanks to the definition of $\nu$,
$$\dot G=\left[\frac{\dot\nu\delta-\nu\dot\delta}{\delta^2}\right](Ax-b)+\frac{\nu}{\delta}A\dot x-\dot y
=\frac{1}{\delta}\big[(\dot\nu+\zeta\nu)(Ax-b)+\nu A\dot x-\delta\dot y\big]
=\frac{1}{\delta}\big[Ax-b+\nu A\dot x-\delta\dot y\big]=0.$$
Thus, $G$ is constant, and we can write
$$Ax-b=\frac{\delta}{\nu}\left[\frac{\nu_0}{\delta_0}(Ax_0-b)+y-y_0\right]=\frac{\delta}{\nu}\left[\frac{\nu_0}{\delta_0}(Ax_0-b)+y-\bar y+\bar y-y_0\right].$$
The definition of $\delta$ ensues that
$$\|Ax-b\|\le\frac{\nu_0\|Ax_0-b\|+\delta_0\|y-\bar y\|+\delta_0\|\bar y-y_0\|}{\nu\Xi},$$
which is \eqref{eq:onesidrate3}. Finally, we bound
$$\left|f\big(x(t)\big)-f(\bar x)\right|\le \Delta_{\bar{x},\bar{y}} + \|\bar y\|\|Ax-b\|,$$
and substitute accordingly to obtain \eqref{eq:onesidrate4}.
Let $(x_k,y_k)=(x(t_k),y(t_k))$ be a  subsequence of $(x(t),y(t))$ converging to $(x_*,y_*)$. Then by lower semicontinuity $L\big(x_*,\bar{y}\big)-L\big(\bar{x},y_*\big)=0$ and $Ax_*-b=0$. This implies that $(x_*,\bar{y}) \in \mathcal{S}$ (see Proposition~11 in \cite{itreg}). If $f$ is $\mu$-strongly convex or if $g$ is $L$-smooth, the corresponding estimates \eqref{eq: estimate x strong linear} and \eqref{eq: estimate y strong linear}) follow from \eqref{eq:onesidrate1} and Remark \ref{remark:strongly_convex}.
\end{proof}

\begin{remark} The convergence rates in Theorem \ref{T:onesided} depend on the choice of $\zeta$. As $\zeta \geq 0$, $\int_0^s\zeta(\tau)d\tau\geq 0$. Therefore, 
$\exp\left(\int_0^s\zeta(\tau)d\tau\right)\geq \exp(0)=1$, and so $$\frac{1}{\nu_0+\int_0^t\Xi(s)ds}\leq \frac{1}{\nu_0+t} \to 0.$$
In particular,
$$
\Delta_{\bar{x},\bar{y}}(t) 
\le  \frac{\mathcal{E}^O_{\bar{x},\bar{y}}(0)}{\nu_0+t},\quad \|Ax(t)-b\|
\le   \frac{\mathcal K_0}{\nu_0+t}\qbox{and} 
\left|f\big(x(t)\big)-f(\bar x)\right|
\le  \frac{\mathcal{E}^O_{\bar{x},\bar{y}}(0)+\|\bar y\|\mathcal K_0}{\nu_0+t}.
$$
Other choices of $\zeta$ give faster rates.
For instance, if we choose $\zeta\equiv1$ and $\nu_0=0$ so that $\nu\equiv1$, we recover the result in \cite{luo2022primal} for $\beta=0$, namely
$$\Delta_{\bar{x},\bar{y}}(t) 
\le  \mathcal{E}^O_{\bar{x},\bar{y}}(0)e^{-t},\quad \|Ax(t)-b\| \le  \mathcal K_0e^{-t}\qbox{and}
\left|f\big(x(t)\big)-f(\bar x)\right| \le \big[\mathcal{E}^O_{\bar{x},\bar{y}}(0)+\|\bar y\|\mathcal K_0\big]e^{-t}.
$$
The larger we set $\zeta$, the faster the rates. We point out, though, that a larger $\zeta$ also implies that the coefficients $\alpha$ and $\delta$ will vanish faster.
\end{remark}

\section{The asymptotically antisymmetric case}\label{S:antisym}

In this section, we discuss the {\it asymptotically} antisymmetric case where $\beta+\gamma\to 0$. In order to obtain a better insight, we first analyze the {\it exact} antisymmetric case $\beta+\gamma\equiv 0$, under the following hypotheses:

\begin{assumption}[Parameter functions] \label{A:antisymmetric}
Pick any differentiable function $\beta:[0,+\infty)\to(0,+\infty)$, set $\beta_0=\beta(0)$, and then define
$$\alpha(t) := \frac{\alpha_0}{\beta_0}\,\beta(t)\,\exp\left(-\int_0^t\frac{ds}{\beta(s)}\right),\qquad \gamma:=-\beta  \qqbox{and}
    \delta(t) := \frac{\delta_0}{\beta_0}\,\beta(t)\,\exp\left(-\int_0^t\frac{ds}{\beta(s)}\right),$$
with $\alpha_0,\delta_0>0$. Finally, consider the energy-function $\mathcal E^A_{u,v}:=\Delta_{u,v}+V_{u,v}$, where $\Delta$ and $V$ are given by \eqref{duality} and \eqref{anchoring}, respectively.
\end{assumption}

\begin{remark}
    If $\dot\beta\equiv 1$, then $\alpha\equiv\alpha_0$ and $\delta\equiv\delta_0$. In turn, if $\dot\beta\ge 1$, then $\alpha\ge\alpha_0$ and $\delta\ge\delta_0$.
\end{remark}

\begin{theorem} \label{T:antisymmetric}
Let Assumption \ref{A:antisymmetric} hold, and let $(x(t),y(t))_{t\geq 0}$ satisfy \eqref{prec_system} for $t>0$. Fix $(u,v)\in \mathcal{X}\times\mathcal{Y}$ and $(\bar{x},\bar{y})\in \mathcal{S}$. Then, the functions $\frac{1}{\beta} d_{\bar{x},\bar{y}}$, $\alpha \|\dot x\|^2$ and $\delta \|\dot y\|^2$ belong to $L^1$, and
\begin{eqnarray}
    \Delta_{u,v}(t)
    & \le & \mathcal E^A_{u,v}(0)\exp\left(-\int_0^t\frac{ds}{\beta(s)}\right) \label{E:antisymmetric_1} \\
    f\big(x(t)\big)-f(\bar x)+\langle A^*\bar y,x(t)-\bar x\rangle 
    & \le & \mathcal E^A_{\bar{x},\bar{y}}(0)\exp\left(-\int_0^t\frac{ds}{\beta(s)}\right) \label{E:antisymmetric_2} \\
    g^*\big(y(t)\big)-g^*(\bar y)+\langle y(t)-\bar y,-A\bar x\rangle
    & \le & \mathcal E^A_{\bar{x},\bar{y}}(0)\exp\left(-\int_0^t\frac{ds}{\beta(s)}\right).\label{E:antisymmetric_3} 
\end{eqnarray}
Moreover, the trajectory is bounded, with
\begin{equation} \label{E:antisymmetric_4}
    \frac{\alpha_0}{2}\|x(t)-\bar x\|^2+\frac{\delta_0}{2}\|y(t)-\bar y\|^2
\le \beta_0\mathcal E^A_{\bar x,\bar y}(0).
\end{equation}
In particular, if $\int_0^\infty\frac{1}{\beta(s)}ds=\infty$, then every weak subsequential limit point of $(x(t),y(t))$, as $t\to+\infty$, is a primal-dual solution of \eqref{opt_probl}. If we further assume that $f$ is $\mu$-strongly convex, then $x(t)$ converges strongly, as $t\to+\infty$, to the unique $\bar x$, with
$$\|x(t)-\bar x\|^2\le \frac{2}{\mu}\,\mathcal E^A_{\bar x,\bar y}(0)\exp\left(-\int_0^t\frac{ds}{\beta(s)}\right).$$
Similarly, if $g$ is $L$-smooth, then
$$\|y(t)-\bar y\|^2 \le 2L\,\mathcal E^A_{\bar x,\bar y}(0)\exp\left(-\int_0^t\frac{ds}{\beta(s)}\right).$$
\end{theorem}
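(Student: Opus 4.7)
The key is to feed Lemma \ref{L:energy} with two choices that exploit the antisymmetry $\beta+\gamma\equiv 0$ imposed by Assumption \ref{A:antisymmetric}. First I would take $Z(t)=\beta(t)$: then $\beta-Z=0$ and $\gamma+Z=-\beta+\beta=0$ annihilate both cross-terms $\scal{x-u}{A^*\dot y}$ and $\scal{y-v}{A\dot x}$ on the right-hand side of \eqref{E:main_lemma}, while $\beta+\gamma=0$ kills the mixed inner product $\scal{A\dot x}{\dot y}$. Second, I would take $\eta(t)=1/\beta(t)$, so that $\eta Z\equiv 1$ and the coefficient in front of $\dot\Delta_{u,v}$ becomes $1$; this is precisely what allows $\dot V_{u,v}+\eta Z\dot\Delta_{u,v}$ to assemble into $\dot{\mathcal E}^A_{u,v}$.

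With these choices, $\sigma=\alpha/\beta$ and $\tau=\delta/\beta$; the specific parametrization of Assumption \ref{A:antisymmetric} is precisely what makes $\sigma(t)=(\alpha_0/\beta_0)\exp(-\int_0^t ds/\beta(s))$ and $\tau(t)=(\delta_0/\beta_0)\exp(-\int_0^t ds/\beta(s))$, hence $\dot\sigma=-\sigma/\beta$, $\dot\tau=-\tau/\beta$, and consequently $\tfrac{\dot\sigma}{2}\|x-u\|^2+\tfrac{\dot\tau}{2}\|y-v\|^2=-V_{u,v}/\beta$. Rearranging \eqref{E:main_lemma} should then produce the clean identity
\begin{equation*}
\dot{\mathcal E}^A_{u,v}+\frac{1}{\beta}\mathcal E^A_{u,v}+\frac{1}{\beta}d_{u,v}+\alpha\|\dot x\|^2+\delta\|\dot y\|^2=0.
\end{equation*}
Grönwall's inequality applied via the integrating factor $\exp(\int_0^t ds/\beta)$ delivers $\mathcal E^A_{u,v}(t)\le\mathcal E^A_{u,v}(0)\exp(-\int_0^t ds/\beta)$ for every $(u,v)$; integrating the identity over $[0,+\infty)$ against the boundedness of $\mathcal E^A_{u,v}$ yields the three $L^1$ assertions.

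The remaining estimates follow by specializing. Since $V_{u,v}\ge 0$, we have $\Delta_{u,v}\le\mathcal E^A_{u,v}$, which gives \eqref{E:antisymmetric_1}. For $(u,v)=(\bar x,\bar y)\in\mathcal S$, both bracketed terms of \eqref{duality} are non-negative by the subdifferential inequality and the optimality conditions \eqref{sol}, so each is bounded above by $\Delta_{\bar x,\bar y}(t)$; this produces \eqref{E:antisymmetric_2} and \eqref{E:antisymmetric_3}. Estimate \eqref{E:antisymmetric_4} comes from $V_{\bar x,\bar y}\le \mathcal E^A_{\bar x,\bar y}$ (using $\Delta_{\bar x,\bar y}\ge 0$, as in Lemma \ref{L:limitpoints}), substituting the explicit forms of $\sigma,\tau$, and cancelling the common exponential factor on both sides—this is where the constant $\beta_0$ in the right-hand side appears.

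For the asymptotic statements, if $\int_0^\infty ds/\beta(s)=+\infty$, then \eqref{E:antisymmetric_1} forces $\limsup_{t\to+\infty}\Delta_{u,v}(t)\le 0$ for every $(u,v)$, and Lemma \ref{L:limitpoints} identifies every weak subsequential limit point as a primal-dual solution. The strong-convergence rates under strong convexity of $f$ (or smoothness of $g$, equivalent to strong convexity of $g^*$) follow immediately from \eqref{E:antisymmetric_2}, \eqref{E:antisymmetric_3} and Remark \ref{remark:strongly_convex}. The only delicate step—indeed the reason Assumption \ref{A:antisymmetric} has the exact form it does—is verifying that the prescribed ansatz for $\alpha$ and $\delta$ is precisely what is needed to enforce $\dot\sigma/\sigma=\dot\tau/\tau=-1/\beta$; once that algebraic matching is in hand, everything else is a direct manipulation of Lemma \ref{L:energy}.
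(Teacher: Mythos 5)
Your proposal is correct and follows essentially the same route as the paper: the choices $Z=\beta$ and $\eta=1/\beta$ in Lemma \ref{L:energy} (which the paper makes implicitly), the observation that Assumption \ref{A:antisymmetric} forces $\dot\sigma=-\sigma/\beta$ and $\dot\tau=-\tau/\beta$ so that the identity collapses to $\dot{\mathcal E}^A_{u,v}+\tfrac{1}{\beta}\mathcal E^A_{u,v}+\tfrac{1}{\beta}d_{u,v}+\alpha\|\dot x\|^2+\delta\|\dot y\|^2= 0$, followed by Gr\"onwall, the splitting of $\Delta_{\bar x,\bar y}$ into two nonnegative Bregman-type terms, and Lemma \ref{L:limitpoints} with Remark \ref{remark:strongly_convex} for the asymptotic claims. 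If anything, your write-up is slightly more explicit than the paper's (which contains a harmless sign typo, writing $\le\tfrac{1}{\beta}V_{u,v}$ where $=-\tfrac{1}{\beta}V_{u,v}$ is meant), but there is no substantive difference.
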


\begin{proof}
Take $(u,v)\in\mathcal{X} \times \mathcal{Y}$, and set $\eta=1/\beta$, so that $\sigma=\alpha/\beta$ and $\tau=\delta/\beta$ in \eqref{anchoring}. Lemma \ref{L:energy} gives
$$ 
\dot V_{u,v}+\dot\Delta_{u,v}+\frac{1}{\beta} \Delta_{u,v} +\frac{1}{\beta} d_{u,v} 
= \frac{\dot\sigma}{2}\|x-u\|^2+\frac{\dot\tau}{2}\|y-v\|^2 - \alpha \|\dot x\|^2-\delta \|\dot y\|^2 \le\frac{1}{\beta}V_{u,v}- \alpha \|\dot x\|^2-\delta \|\dot y\|^2.
$$
Since
$\dot{\mathcal E}^A_{u,v}=\dot V_{u,v}+\dot\Delta_{u,v}$,
we deduce that
\begin{equation} \label{E:EG'_nonpositive3}
   \dot{\mathcal E}^A_{u,v}+\frac{1}{\beta} \mathcal E^A_{u,v}+\frac{1}{\beta} d_{u,v} + \alpha \|\dot x\|^2 + \delta \|\dot y\|^2\le 0,
\end{equation}
which implies the integrability conditions, plus
\begin{equation} \label{E:general_energy_to_zerobefore}
    V_{u,v}(t)+\Delta_{u,v}(t)=\mathcal E^A_{u,v}(t)\le \mathcal E^G_{u,v}(0)\exp\left(-\int_0^t\frac{1}{\beta(s)}ds\right).
\end{equation}
This estimation gives \eqref{E:antisymmetric_1}, \eqref{E:antisymmetric_2}, \eqref{E:antisymmetric_3} and \eqref{E:antisymmetric_4}. In particular,
\begin{equation} \label{E:general_gap_to_zero}
    \Delta_{u,v}(t)\le \mathcal E^A_{u,v}(0)\exp\left(-\int_0^t\frac{1}{\beta(s)}ds\right),
\end{equation}
and so
$$
\limsup_{t\to+\infty}\Delta_{u,v}(t)\le 0.
$$
In view of Lemma \ref{L:limitpoints}, weak subsequential limit points are primal-dual solutions. The remaining statements follow from Remark \ref{remark:strongly_convex}. 
\end{proof}

\begin{remark}
If $\beta\equiv 1$, $\gamma\equiv -1$ and $\eta\equiv 1$, then $\Delta_{u,v}(t)\le \mathcal E^G_{u,v}(0)e^{-t}$ for every $(u,v)\in \mathcal{X}\times\mathcal{Y}$, and
$$
\begin{array}{rcccl}
0 & \le & f\big(x(t)\big)-f(\bar x)+\langle A^*\bar y,x(t)-\bar x\rangle 
    & \le & \mathcal E^G_{\bar x,\bar y}(0)e^{-t} \smallskip \\
0 & \le & g^*\big(y(t)\big)-g^*(\bar y)+\langle y(t)-\bar y,-A\bar x\rangle
    & \le & \mathcal E^G_{\bar x,\bar y}(0)e^{-t}
\end{array}$$ 
for every $(\bar{x},\bar{y})\in \mathcal{S}$.
\end{remark}

\subsection*{A perturbed version}
We are now in a position to study a more general setting, under very similar hypotheses, namely:

\begin{assumption}[Parameter functions] \label{A:general}
Pick differentiable functions $\beta,\gamma:[0,+\infty)\to\R$, such that $\beta(t)>\gamma(t)$ for every $t\ge 0$. Set $Z=\frac{\beta-\gamma}{2}$ and $Z_0=Z(0)$, and define $\alpha,\delta:[0,+\infty)\to(0,+\infty)$ by
$$\alpha(t)=\frac{1}{\Xi(t)}\left[\alpha_0-\sqrt{\frac{\alpha_0}{\delta_0}}\int_0^t\Xi(s)W(s)\,ds\right] \qqbox{and} \delta(t)=\frac{1}{\Xi(t)}\left[\delta_0-\sqrt{\frac{\delta_0}{\alpha_0}}\int_0^t\Xi(s)W(s)\,ds\right],$$
where
$$W(t)=\frac{\|A\|}{2}\,\frac{|\beta(t)+\gamma(t)|}{\beta(t)-\gamma(t)} \qqbox{and} \Xi(t)=\exp\left(\int_0^t\frac{1-\dot Z(s)}{Z(s)}\,ds\right)=\frac{Z_0}{Z(t)}\,\exp\left(\int_0^t\frac{ds}{Z(s)}\right).$$
with $\alpha_0,\delta_0>0$. We assume, moreover, that $\|A\||\beta+\gamma|\le\sqrt{\alpha\delta}$, which is equivalent to
\begin{equation} \label{E:A_general}
    4Z(t)\Xi(t)W(t)+\int_0^t\Xi(s)W(s)\,ds\le\sqrt{\alpha_0\delta_0},
\end{equation}
for all $t\ge 0$, and ensures that $\alpha,\delta>0$.  
\end{assumption}

\begin{remark}
If $\beta+\gamma\equiv 0$, then $W\equiv 0$ and $Z=\beta$. The definitions of $\alpha$ and $\delta$ coincide with the ones in Assumption \ref{A:antisymmetric}.   
\end{remark}

\begin{theorem} \label{T:general_theorem}
    Let Assumption \ref{A:general} hold, and let $(x(t),y(t))_{t\geq 0}$ satisfy \eqref{prec_system} for $t>0$. Then, for every $(u,v)\in \mathcal{X}\times\mathcal{Y}$ and $(\bar{x},\bar{y})\in \mathcal{S}$, we have
    \begin{eqnarray*}
        \Delta_{u,v}(t)
        & \le & D_{u,v}(0)\exp\left(-\int_0^t\frac{ds}{Z(s)}\right) \\
        f\big(x(t)\big)-f(\bar x)+\langle A^*\bar y,x(t)-\bar x\rangle 
        & \le & D_{\bar x,\bar y}(0)\exp\left(-\int_0^t\frac{ds}{Z(s)}\right) \\
        g^*\big(y(t)\big)-g^*(\bar y)+\langle y(t)-\bar y,-A\bar x\rangle
        & \le & D_{\bar x,\bar y}(0)\exp\left(-\int_0^t\frac{ds}{Z(s)}\right), 
    \end{eqnarray*}
    where 
    $$D_{u,v}(0)=\frac{\alpha_0}{2Z_0}\|x_0-u\|^2+\frac{\delta_0}{2Z_0}\|y_0-v\|^2+\Delta_{u,v}(0).$$
    In particular, if $\int_0^\infty\frac{1}{Z(s)}ds=\infty$, then every weak subsequential limit point of $(x(t),y(t))$, as $t\to+\infty$, is a primal-dual solution of \eqref{opt_probl}. If, moreover, $\int_0^\infty\Xi(s)W(s)\,ds<\sqrt{\alpha_0\delta_0}$, the trajectory remains bounded. Finally, if we further assume that $f$ is $\mu$-strongly convex, then $x(t)$ converges strongly, as $t\to+\infty$, to the unique $\bar x$, with
    $$\|x(t)-\bar x\|^2\le \frac{2D_{\bar x,\bar y}(0)}{\mu}\exp\left(-\int_0^t\frac{ds}{Z(s)}\right).$$
    Similarly, if $g$ is $L$-smooth, then
    $$\|y(t)-\bar y\|^2 \le 2LD_{\bar x,\bar y}(0)\exp\left(-\int_0^t\frac{ds}{Z(s)}\right).$$
\end{theorem}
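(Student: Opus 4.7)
The plan is to mimic the proof of Theorem~\ref{T:antisymmetric}, with the parameter $Z$ in Lemma~\ref{L:energy} taken equal to $(\beta-\gamma)/2$ and $\eta=1/Z$, so that $\sigma=\alpha/Z$ and $\tau=\delta/Z$. This choice is crucial because $\beta-Z=\gamma+Z=(\beta+\gamma)/2$, which balances the two residual cross terms in \eqref{E:main_lemma} involving $\langle x-u,A^*\dot y\rangle$ and $\langle y-v,A\dot x\rangle$. Setting the energy $\mathcal E_{u,v}:=V_{u,v}+\Delta_{u,v}$, the target inequality is $\dot{\mathcal E}_{u,v}+\mathcal E_{u,v}/Z+d_{u,v}/Z\le 0$; once established, Gr\"onwall (integrating factor $\exp(\int_0^t ds/Z(s))$) yields $\mathcal E_{u,v}(t)\le\mathcal E_{u,v}(0)\exp(-\int_0^t ds/Z(s))$, which by construction equals $D_{u,v}(0)\exp(-\int_0^t ds/Z(s))$ and gives the first displayed bound directly from $\Delta_{u,v}\le\mathcal E_{u,v}$.

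The heart of the argument is the pointwise inequality. From Lemma~\ref{L:energy} and the definition of $\mathcal E_{u,v}$, one obtains
\[
\dot{\mathcal E}_{u,v}+\frac{\mathcal E_{u,v}}{Z}+\frac{d_{u,v}}{Z}=\frac{V_{u,v}}{Z}+\frac{\dot\sigma}{2}\|x-u\|^2+\frac{\dot\tau}{2}\|y-v\|^2-\alpha\|\dot x\|^2-\delta\|\dot y\|^2-(\beta+\gamma)\scal{A\dot x}{\dot y}-\frac{\beta+\gamma}{2Z}\bigl[\scal{x-u}{A^*\dot y}+\scal{y-v}{A\dot x}\bigr].
\]
Differentiating the defining relations $\alpha\Xi=\alpha_0-\sqrt{\alpha_0/\delta_0}\int_0^t\Xi W$ and the analogous one for $\delta$, a direct calculation shows that the first three terms on the right collapse to $-\frac{W\sqrt{\alpha_0/\delta_0}}{2Z}\|x-u\|^2-\frac{W\sqrt{\delta_0/\alpha_0}}{2Z}\|y-v\|^2$. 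For the remaining terms, the hypothesis $\|A\||\beta+\gamma|\le\sqrt{\alpha\delta}$ yields $|(\beta+\gamma)\scal{A\dot x}{\dot y}|\le\tfrac12(\alpha\|\dot x\|^2+\delta\|\dot y\|^2)$, while Young's inequality with weights $\varepsilon_1=W\sqrt{\alpha_0/\delta_0}/Z$ and $\varepsilon_2=W\sqrt{\delta_0/\alpha_0}/Z$ bounds the two cross terms by exactly $\frac{W\sqrt{\alpha_0/\delta_0}}{2Z}\|x-u\|^2+\frac{W\sqrt{\delta_0/\alpha_0}}{2Z}\|y-v\|^2+\frac{WZ\sqrt{\delta_0/\alpha_0}}{2}\|\dot y\|^2+\frac{WZ\sqrt{\alpha_0/\delta_0}}{2}\|\dot x\|^2$. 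The $\|x-u\|^2$ and $\|y-v\|^2$ terms cancel exactly, and condition \eqref{E:A_general}, rewritten as $4ZW\sqrt{\delta_0/\alpha_0}\le\delta$ and $4ZW\sqrt{\alpha_0/\delta_0}\le\alpha$, ensures that the $\|\dot x\|^2,\|\dot y\|^2$ leftovers are dominated by $\tfrac12(\alpha\|\dot x\|^2+\delta\|\dot y\|^2)$. The main obstacle of the proof lies precisely in this bookkeeping: the structure of Assumption~\ref{A:general} was reverse-engineered so that every Young's constant matches, and the verification must be done carefully.

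Once the exponential decay of $\Delta_{u,v}$ is in hand, specializing $(u,v)=(\bar x,\bar y)\in\mathcal S$ and noting that $\Delta_{\bar x,\bar y}$ decomposes, via \eqref{duality}, into the sum of the two non-negative quantities $f(x)-f(\bar x)+\scal{A^*\bar y}{x-\bar x}$ and $g^*(y)-g^*(\bar y)+\scal{y-\bar y}{-A\bar x}$, yields the second and third displayed inequalities by dropping the positive term in each line. If $\int_0^\infty ds/Z(s)=\infty$ then $\Delta_{u,v}(t)\to 0$ for every $(u,v)$, and Lemma~\ref{L:limitpoints} gives that every weak subsequential limit of $(x(t),y(t))$ is a primal-dual solution. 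For boundedness, observe that $V_{\bar x,\bar y}\le\mathcal E_{\bar x,\bar y}$, so
\[
\|x(t)-\bar x\|^2\le\frac{2Z(t)}{\alpha(t)}\,D_{\bar x,\bar y}(0)\exp\left(-\int_0^t\frac{ds}{Z(s)}\right)=\frac{2Z_0\,D_{\bar x,\bar y}(0)}{\alpha(t)\Xi(t)}=\frac{2Z_0\,D_{\bar x,\bar y}(0)}{\alpha_0-\sqrt{\alpha_0/\delta_0}\int_0^t\Xi W\,ds},
\]
which remains bounded precisely when $\int_0^\infty\Xi(s)W(s)\,ds<\sqrt{\alpha_0\delta_0}$; the analogous computation applies to $y$. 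The two strong convergence statements follow from Remark~\ref{remark:strongly_convex} applied to $f$ (strongly convex) and to $g^*$ (which is $(1/L)$-strongly convex when $g$ is $L$-smooth), combined with the bound already obtained on $\Delta_{\bar x,\bar y}$.
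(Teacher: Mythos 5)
Your proposal is correct and follows essentially the same route as the paper: Lemma~\ref{L:energy} with the lemma's free parameter set to $Z=(\beta-\gamma)/2$, Young's inequality to absorb the cross terms, the structure of Assumption~\ref{A:general} to cancel the $\|x-u\|^2$ and $\|y-v\|^2$ coefficients exactly, and Gr\"onwall; your normalization $\eta=1/Z$ with energy $V_{u,v}+\Delta_{u,v}$ is just a rescaling of the paper's choice $\eta=\Xi$ with energy $V_{u,v}+\Xi Z\Delta_{u,v}$. The only blemish is a factor-of-$4$ slip in the stated Young's-inequality leftovers (the coefficient of $\|\dot y\|^2$ matching $\frac{W\sqrt{\alpha_0/\delta_0}}{2Z}\|x-u\|^2$ is $2WZ\sqrt{\delta_0/\alpha_0}$, not $\frac{WZ\sqrt{\delta_0/\alpha_0}}{2}$), which is harmless because the rewriting of \eqref{E:A_general} as $4ZW\sqrt{\delta_0/\alpha_0}\le\delta$ and $4ZW\sqrt{\alpha_0/\delta_0}\le\alpha$ is exactly what the correct coefficients require.
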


\begin{proof}
Take $(u,v)\in\mathcal{X} \times \mathcal{Y}$. Set $\eta=\Xi$, so that $\sigma=\alpha\Xi$ and $\tau=\delta\Xi$ in \eqref{anchoring}. Writing $Z=\frac{\beta-\gamma}{2}>0$, Lemma \ref{L:energy} gives
\begin{eqnarray*}
    \dot V_{u,v}+\Xi Z\dot\Delta_{u,v}+\Xi \Delta_{u,v} +\Xi d_{u,v} 
    & = & \frac{\dot\sigma}{2}\|x-u\|^2+\frac{\dot\tau}{2}\|y-v\|^2 - \Xi Z\alpha \|\dot x\|^2-\Xi Z\delta \|\dot y\|^2 \\
    &&\ -\ \frac{\Xi(\beta+\gamma)}{2}\big[\scal{x-u}{ A^*\dot{y}}+ \scal{y-v}{A\dot{x}}+2Z\scal{A\dot x}{\dot y}\big].
\end{eqnarray*}
Using Young's inequality repeatedly, we can bound the terms in the second line, to deduce that
\begin{eqnarray*}
    \dot V_{u,v}+\Xi Z\dot\Delta_{u,v}+\Xi \Delta_{u,v} +\Xi d_{u,v} 
    & \le & \left[\frac{\dot\sigma}{2}+ \frac{\Xi|\beta+\gamma|\|A\|}{4\varepsilon_1}\right]\|x-u\|^2+\left[\frac{\dot\tau}{2}+ \frac{\Xi|\beta+\gamma|\|A\|}{4\varepsilon_2}\right]\|y-v\|^2  \\
    &&\ +\ \Xi\left[\frac{|\beta+\gamma|\|A\|(\varepsilon_2+2Z\varepsilon_3^{-1})}{4}-\alpha Z\right] \|\dot x\|^2 \\
    &&\ +\ \Xi\left[\frac{|\beta+\gamma|\|A\|\big(\varepsilon_1+2Z\varepsilon_3\big)}{4}-\delta Z\right]\|\dot y\|^2
\end{eqnarray*}
for every $\varepsilon_1,\varepsilon_2,\varepsilon_3>0$. By writing $M=|\beta+\gamma|\|A\|$, and setting $\varepsilon_1=2Z\varepsilon_3$ and $\varepsilon_2=2Z\varepsilon_3^{-1}$, we get
\begin{eqnarray*}
    \dot V_{u,v}+\Xi Z\dot\Delta_{u,v}+\Xi \Delta_{u,v} +\Xi d_{u,v} 
    & \le & \left[\frac{\dot\sigma}{2}+ \frac{\Xi M}{8Z\varepsilon_3}\right]\|x-u\|^2+\left[\frac{\dot\tau}{2}+ \frac{\Xi M\varepsilon_3}{8Z}\right]\|y-v\|^2  \\
    &&\ +\ \Xi Z\left[M\varepsilon_3^{-1}-\alpha \right] \|\dot x\|^2 + \Xi Z\left[M\varepsilon_3-\delta \right]\|\dot y\|^2
\end{eqnarray*}
Now, write $\omega=M/\sqrt{\alpha\delta}\in[0,1]$, so that $M=\omega\sqrt{\alpha\delta}$, and
\begin{eqnarray*}
    \dot V_{u,v}+\Xi Z\dot\Delta_{u,v}+\Xi \Delta_{u,v} +\Xi d_{u,v} 
    & \le & \left[\frac{\dot\sigma}{2}+ \frac{\Xi\omega \sqrt{\alpha\delta}}{8Z\varepsilon_3}\right]\|x-u\|^2+\left[\frac{\dot\tau}{2}+ \frac{\Xi\omega \sqrt{\alpha\delta}\varepsilon_3}{8Z}\right]\|y-v\|^2  \\
    &&\ +\ \Xi Z\left[\omega\sqrt{\alpha\delta}\varepsilon_3^{-1}-\alpha \right] \|\dot x\|^2 + \Xi Z\left[\omega\sqrt{\alpha\delta}\varepsilon_3-\delta \right]\|\dot y\|^2.
\end{eqnarray*}
Since $\omega\le 1$, the choice $\varepsilon_3=\sqrt{\delta/\alpha}$ makes the last two coefficients non-positive. We conclude that
\begin{eqnarray*}
\dot V_{u,v}+\Xi Z\dot\Delta_{u,v}+\Xi \Delta_{u,v} +\Xi d_{u,v} 
& \le & \left[\dot\sigma+ \frac{\omega\sigma}{4Z}\right]\frac{\|x-u\|^2}{2}+\left[\dot\tau + \frac{\omega\tau}{4Z}\right]\frac{\|y-v\|^2}{2} \\
&&\ -(1-\omega)\alpha\Xi Z\|\dot x\|^2-(1-\omega)\delta\Xi Z\|\dot x\|^2,
\end{eqnarray*}
since $\sigma=\alpha\Xi$ and $\tau=\delta\Xi$. The definitions of $\alpha$ and $\delta$ in Assumption \ref{A:general} imply that
$$\dot\sigma+ \frac{\omega\sigma}{4Z}=\dot\tau+ \frac{\omega\tau}{4Z}=0.$$
Indeed, we have
$$\dot\sigma=\frac{d}{dt}\big(\Xi\alpha\big) =-\sqrt{\frac{\alpha_0}{\delta_0}}\,\Xi W,\qbox{while}\frac{\omega\sigma}{4Z}=\frac{\left[\frac{M}{\sqrt{\alpha\delta}}\right]\big[\Xi\alpha\big]}{4Z}=\sqrt{\frac{\alpha}{\delta}}\,\Xi\,\frac{M}{4Z}=\sqrt{\frac{\alpha_0}{\delta_0}}\,\Xi W=-\dot\sigma,$$
and similarly for $\tau$. It follows that
\begin{equation} \label{E:EG'_nonpositive1}
\dot V_{u,v}+\Xi Z\dot\Delta_{u,v}+\Xi \Delta_{u,v} +R_{u,v}\le 0,
\end{equation}
where we have written 
$$R_{u,v}:=\Xi d_{u,v} +(1-\omega)\Xi Z\big(\alpha\|\dot x\|^2+\delta\|\dot y\|^2\big)\ge 0.$$ 
Defining 
$\mathcal E^G_{u,v}:=V_{u,v}+\Xi Z\Delta_{u,v}$, we get
$$\dot{\mathcal E}^G_{u,v}=\dot V_{u,v}+\Xi Z\dot\Delta_{u,v}+(\dot\Xi Z+\Xi \dot Z)\Delta_{u,v}=\dot V_{u,v}+\Xi Z\dot\Delta_{u,v}+\Xi\Delta_{u,v},
$$
in view of the definition of $\Xi$. Therefore, \eqref{E:EG'_nonpositive1} is equivalent to
$$   \dot{\mathcal E}^G_{u,v}+R_{u,v}\le 0.$$
Since $R_{u,v}\ge 0$, it follows from Gr\"onwall's Inequality that
\begin{equation} \label{E:general_energy_to_zero}
    V_{u,v}(t)+\Xi(t) Z(t)\Delta_{u,v}(t)=
    \mathcal E^G_{u,v}(t)\le \mathcal E^G_{u,v}(0).
\end{equation}
In particular,
$$    \Delta_{u,v}(t)\le \frac{\mathcal E^G_{u,v}(0)}{\Xi(t) Z(t)}=D_{u,v}(0)\exp\left(-\int_0^t\frac{ds}{Z(s)}\right),
$$
which is the first claimed estimation, and implies the next two. Since $\frac{1}{Z}\notin L^1$, we have
$$
\limsup_{t\to+\infty}\Delta_{u,v}(t)\le 0.
$$
Lemma \ref{L:limitpoints} then shows that the weak subsequential limit points are primal-dual solutions.
Now, if $(\bar x,\bar y)\in\mathcal S$, then 
$\Delta_{\bar x,\bar y}\ge 0$. Therefore,  \eqref{E:general_energy_to_zero} gives
$$\frac{\alpha(t)\Xi(t)}{2}\|x(t)-\bar x\|^2+\frac{\delta(t)\Xi(t)}{2}\|y(t)-\bar y\|^2
\le \mathcal E^G_{\bar x,\bar y}(0),$$
by the definition of $V$. If $\int_0^\infty\Xi(s)W(s)\,ds<\sqrt{\alpha_0\delta_0}$, then $\alpha\Xi$ and $\delta\Xi$ are bounded from below by a positive constant, so the trajectory $(x,y)$ must remain bounded. As in Theorem \ref{T:antisymmetric}, the remaining statements concerning the $\mu$-strong convexity of $f$ (and the $L$-smoothness of $g$ respectively) follow easily from Remark \ref{remark:strongly_convex}
\end{proof}

\begin{remark}
For Assumption \ref{A:general} to hold, $\beta-\gamma$ cannot go to infinity too fast (if it does go to infinity), in order for $\frac{1}{\beta-\gamma}$ not to be integrable on $(0,+\infty)$. On the other hand, the function
$$
\frac{|\beta+\gamma|}{\big(\beta-\gamma\big)^2}\exp\left(\int\frac{2}{\beta-\gamma}\right)$$
must go to zero sufficiently fast to ensure \eqref{E:A_general}. The two conditions can be obtained with $\beta-\gamma\sim t$ and $\beta+\gamma\sim\frac{1}{t+1}$, or also if $\beta-\gamma\equiv 2Z_0$ and $|\beta+\gamma|\sim e^{-\kappa t}$ with $\kappa Z_0>1$, adjusting the constants appropriately.
\end{remark}

\section{Numerical experiments} \label{S:num}

In this section we illustrate the behavior of system  \eqref{prec_system} on some synthetic examples. We test numerically two instances of system \eqref{prec_system}, namely the antisymmetric case (i.e.  \eqref{prec_system} with $\gamma=-\beta$) and the symmetric one \eqref{prec_system_simm} discussed in the Appendix (i.e. \eqref{prec_system} with $\gamma = \beta$). In Example~\ref{ex:linconstr1}, that is a linearly constrained problem (see \eqref{linconstrainedproblem}), we also test a triangular preconditioner, with $\beta=0$. All the experiments below were performed in the programming language Julia, by using the ODE solver “Tsit5” with absolute tolerance $\approx 10^{-12}$. 

\begin{example}[Quadratic minimization]
    In this first example, we consider the minimization problem \eqref{opt_probl} in $\R^{d}$, with $d=100$, $A\in\R^{d\times d}$ a random matrix, $f(x) = \frac{1}{2}\ps{Bx}{x}$, $g(y)=\frac{1}{2}\ps{Cy}{y}$, where $B\in \R^{d\times d}$ is a positive semi-definite matrix with a non-trivial kernel ($10\%$ of the eigenvalues are chosen to be equal to zero) and $C\in R^{d\times d}$ is positive definite (thus invertible). In this case, we compare the convergence of trajectories in terms of the duality gap $\Delta_{\bar{x},\bar{y}}$ defined in \eqref{eq:duality_gap} for: the antisymmetric system (\eqref{prec_system} with $\gamma=-\beta$), the symmetric system \eqref{prec_system_simm}, and the classical Arrow-Hurwicz method (which corresponds to system \eqref{prec_system_simm}, without preconditioning, i.e. $\beta=\gamma=0$ and $\alpha=\delta =1$) . In Figure \ref{Figure4} we can observe that the preconditioning corresponding to the antisymmetric system leads to a faster decay for the duality gap $\Delta_{\bar{x},\bar{y}}$, with respect to the Arrow-Hurwicz system, as also to the symmetric system \eqref{prec_system_simm}. 

\begin{figure}[t]
	\begin{center}
		\includegraphics[trim=0cm 4mm 0mm 1mm, scale=0.33]{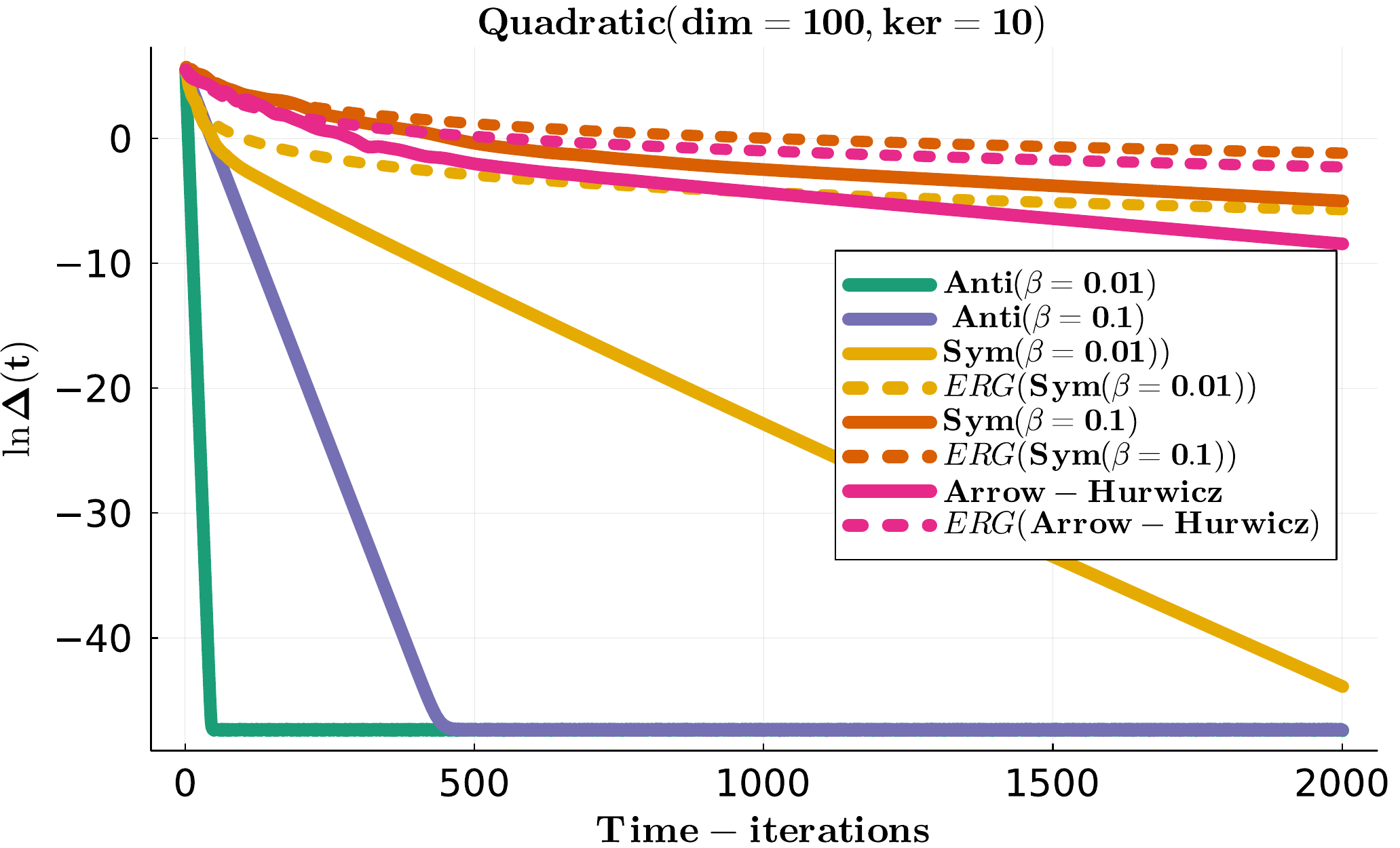}
	\end{center}
	\caption{Comparison of the antisymmetric and symmetric system, with two different choices for the parameter $\beta$ and the standard Arrow-Hurwicz method for $100$-dimensional quadratic minimization problem ($f(x) = \ps{Bx}{x}/2$, $g(x)=\ps{Cx}{x}/2$). The green and light-purple lines correspond to the trajectories of the antisymmetric system with $\beta=0.01$ and $\beta=0.1$ (resp) $\alpha(t)=\delta(t)=\beta\exp\left(-\frac{t}{\beta}\right)$ in each case, according to Assumption \ref{A:antisymmetric}.
		In yellow and orange the trajectories generated by the symmetric system with  with $\beta=0.01$ and $\beta=0.1$ and $\alpha = \beta\norm{A}{}+0.001$ in each case and their corresponding ergodic ones in dotted lines. 
		In magenta the trajectory corresponding to  the Arrow-Hurwicz method (system \eqref{prec_system} without preconditioning, i.e. $\beta = \gamma =0$ and $\alpha = \delta =1$). }
\label{Figure4}
\end{figure}
\end{example}

\begin{example}[Linear constrained quadratic]\label{ex:linconstr1}
In this example, we consider problem \eqref{opt_probl} in $\R^{d}$, with $d=50$, $A\in \R^{d\times d}$ a random matrix, $f(x) = \frac{1}{2}\ps{Bx}{x}$, where $B\in\R^{d\times d}$ is a random positive semi-definite matrix with non-empty kernel ($10\%$ of the eigenvalues equal to $0$) and $g(x)=\iota_{\{b\}}(y)$ is the indicator function, where $b$ is a random vector generated to be in the range of $A$. 
In this case we compare the performance of the trajectories of the antisymmetric system (i.e. \eqref{prec_system} with $\gamma=-\beta$), the symmetric one \eqref{prec_system_simm} and the ones generated by the triangular system \eqref{E:lin_prec_system}, in terms of the langrangian gap $\Delta_{\bar{x},\bar{y}}$. We consider two different choices for the parameter $\beta$ ($\beta=0.1$ and $\beta =0.5$) for the antisymmetric and symmetric system and two choices for the parameter $\nu$ ($\nu=0.1$ and $\nu =0.5$) for the triangular system. The corresponding results are reported to Figure \ref{Figure5}. The convergence of the duality gap along the trajectories of the antisymmetric system is much faster than the one along the the trajectories of symmetric and the triangular systems in this setting.

\begin{figure}[t]
\begin{center}
	\includegraphics[trim=0cm 4mm 0mm 1mm, scale=0.33]{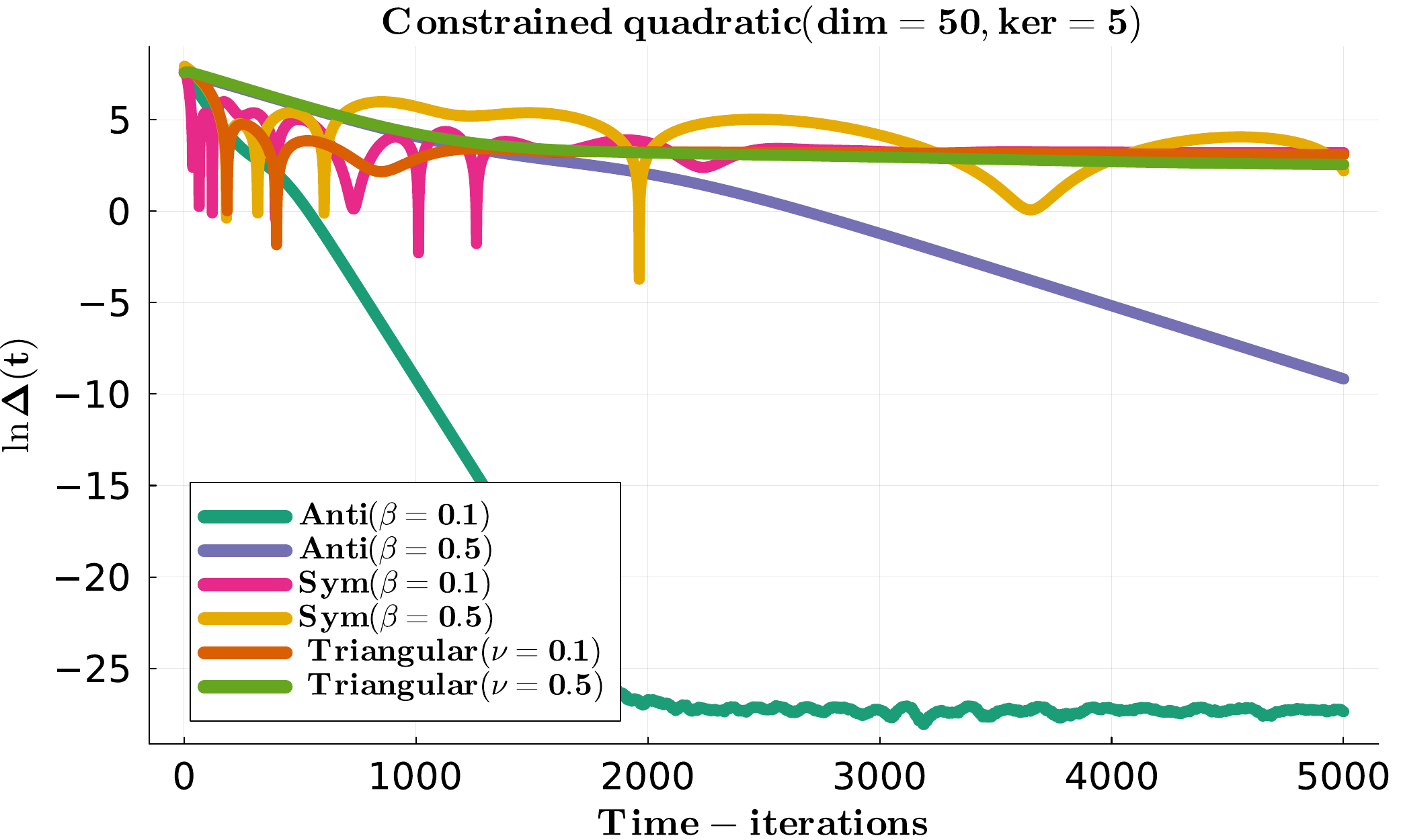}
\end{center}
\caption{Comparison of the antisymmetric with the symmetric and the triangular ODE system for $50$-dimensional quadratic minimization problem ($f(x) = \ps{Bx}{x}/2$, $g(y)=\iota_{\{b\}}(y)$). The dark green and light purple lines correspond to the antisymmetric case with $\beta=0.1$ and $\beta=0.5$ (resp.), with $\alpha(t)=\delta(t)=\beta\exp\left(-\frac{t}{\beta}\right)$ in each case, while the magenta and yellow one correspond to the symmetric system with $\beta=0.1$ and $\beta=0.5$ (resp.) and $\alpha=\beta\norm{A}{}+0.001$ for each case. In orange and light green the trajectories generated by the triangular system \eqref{E:lin_prec_system} with $\nu=0.1$ and $\nu=0.5$ (resp.) and $\beta =0 $ as considered in \cite{luo2022primal}.}\label{Figure5}
\end{figure}
\end{example}

\begin{example}[LASSO]
    In this example we consider the LASSO problem $\underset{x\in \R^{40}}{\min}\frac{1}{2} \norm{Ax-b}{}^{2}  + \lambda \norm{x}{1}$, where $A\in \R^{20\times 40}$ is a random matrix  and $b=Ax_{\ast}+\epsilon$, where $\epsilon$ is a a Gaussian centered at the origin with variance $0.05$. 
In this problem we set $\lambda=0.005$ and we compare the solution-trajectories of the symmetric and antisymmetric system in terms of the duality gap $\Delta_{\bar{x},\bar{y}}(\cdot,\cdot)$. Here we consider $\bar{x}$ as the output of the standard Forward-Backward algorithm applied to the the LASSO problem (see e.g. \citet{combettes2005signal}), after $10^5$ iterations, as a good approximation of a true solution of $\underset{x\in \R^{40}}{\min}\frac{1}{2}\norm{Ax-b}{}^{2} + \lambda\norm{x}{1}$,
while $\bar{y}=A\bar{x}-b$. Due to long computational time of the ODE solver (Tsit$5$), we additionally included a cap of $30$ minutes of overall time as a stopping criterion to each system's solver (symmetric \& antisymmetric). The corresponding results are reported to Figure \ref{Figurelasso}. Similarly to the previous examples,  Figure \ref{Figurelasso} shows a faster convergence of the antisymmetric system. 

\begin{figure}[t]
	\begin{center}
    	\includegraphics[trim=0cm 4mm 0mm 1mm, scale=0.33]{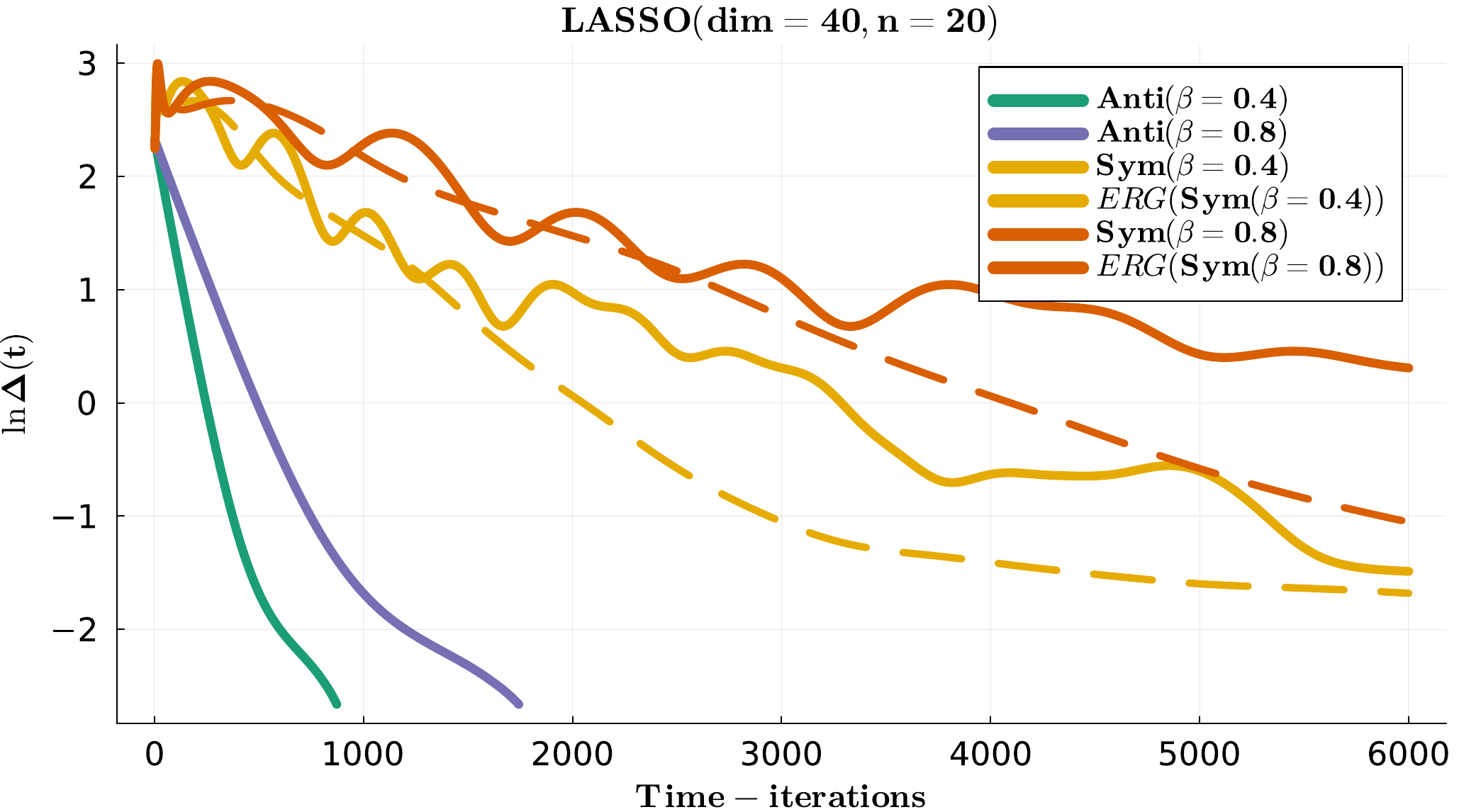}
	\end{center}
	\caption{Comparison of the trajectories generated by the antisymmetric and symmetric system for the LASSO problem, in terms of Langrangian gap $\Delta_{\bar{x},\bar{y}}(t)$. The plots in dark green and light purple color correspond to the antisymmetric system with $\beta = 0.4$ and $\beta=0.8$ respectively, with $\alpha(t)=\delta(t)=\beta\exp\left(-\frac{t}{\beta}\right)$ in each case. In solid yellow and orange color the ones corresponding to the symmetric system with $\beta=0.4$ and $\beta=0.8$ (resp.) and $\alpha = \beta\norm{A}{}+0.005$ and in dashed (yellow and orange) lines their ergodic versions (resp.).}\label{Figurelasso}
\end{figure}
\end{example}

\section{Concluding remarks}\label{S:conclusions}

We have studied a continuous-time dynamic model for primal-dual splitting in the form of a preconditioned saddle-point flow. When applied to linearly constrained optimization problems, the proposed dynamics yield fast convergence rates for the primal-dual gap, which in turn imply rates for both the optimality and the feasibility gaps. Remarkably, these convergence rates are largely insensitive to the choice of non-diagonal preconditioner coefficients. For the general problem \eqref{opt_probl}, we found that symmetric preconditioners do not significantly improve convergence properties beyond what is induced by a time-rescaling. In contrast, antisymmetric preconditioners allow for the convergence of the non-ergodic primal-dual gap, and we establish corresponding non-ergodic convergence rates. Numerical simulations further suggest that antisymmetric preconditioning contributes to improved stabilization of the system. 

Our results raise several questions: 
First, under antisymmetric preconditioning, the convergence of the non-ergodic primal-dual gap implies that weak subsequential limits points of the non-ergodic trjectories are primal-dual solutions, it is still unclear whether the trajectories themselves do converge. A sufficient condition for this would be that $\lim_{t\to\infty}\|(x(t),y(t))-(\bar x,\bar y)\|$ exists, for every $(\bar x,\bar y)\in\cal S$. Second, it would be interesting to investigate whether other combinations of parameters, beyond the asymptotically antisymmetric ones, yield non-ergodic convergence, in the case of the 
general problem \eqref{opt_probl}. Next, our analysis is limited to problems with bilinear coupling. Extending the theory to more general saddle-point problems, involving nonlinear convex-concave Lagrangians remains an open challenge. In such cases, it is not clear how to design appropriate preconditioners.
Finally, in the presence of multiple primal-dual solutions, the question of whether preconditioning induces a bias in solution selection deserves attention.

\appendix

\section{The symmetric case $\gamma=\beta$} \label{S:symmetric}

In this section we analyze system \eqref{prec_system} under the parameter choice $\gamma=\beta$, which leads to the following formulation
\begin{equation} \label{prec_system_simm}
\left\{\begin{array}{rcl}
\alpha(t)\dot{x}(t)+\beta(t) A^*\dot{y}(t) + \partial f(x(t))+A^*y(t) & \ni & 0\\
\beta(t) A\dot{x}(t)+\delta(t) \dot{y}(t)-Ax(t)+\partial g^*(y(t)) & \ni & 0.
\end{array}\right.
\end{equation} 

\begin{remark}
    If $\alpha$ and $\delta$ are constant multiples of $\beta$, this system is a time reparameterization of a modified Arrow-Hurwicz system, as we explain in the proof of Theorem \ref{T:sym_rates}, part v). The convergence rates given below hold under more general conditions.
\end{remark}

We establish convergence results for the ergodic trajectory, defined as follows
$$\hat{x}(t):=\frac{1}{\int_0^t 1/\beta(s) \ ds} \int_0^t x(s)/\beta(s);\quad \hat{y}(t):=\frac{1}{\int_0^t 1/\beta(s) \ ds}  \int_0^t y(s)/\beta(s)\ ds.$$
The duality gap associated with the ergodic trajectory is denoted by
$$\hat{\Delta}_{u,v}(t):=L(\hat{x}(t),v)-L(u,\hat{y}(t)).$$

In this setting, the Lyapunov function is constructed from the anchoring function and the linear coupling introduced in \eqref{anchoring} and \eqref{eq:lin_coupl} in Section~\ref{S:estimates}:
\begin{equation} \label{E:parameters_symmetric}
\mathcal E^S_{u,v}:=V_{u,v}+ M_{u,v},
\quad \text{with}\quad \sigma=\frac{\alpha}{\beta} \quad\text{and\quad} \tau=\frac{\delta}{\beta}
\end{equation}
(so $\eta=1/\beta$). We next establish convergence properties and explicit convergence rates for the duality gap along the ergodic trajectory, together with the corresponding estimates for the associated Bregman divergence \eqref{eq:defd}.
\begin{theorem}\label{T:sym_rates}
Let the functions $\alpha, \beta$ and $\delta$ in \eqref{prec_system_simm} be absolutely continuous and such that $\beta(t)>0$ for every $t\geq 0$. Assume also that $\frac{\alpha}{\beta}$ and $\frac{\delta}{\beta}$ are nonincreasing, and that
\begin{equation} \label{eq:negder}
c:=\min\left\{\inf_{t\ge 0} \frac{\alpha(t)}{\beta(t)},\inf_{t\ge 0} \frac{\delta(t)}{\beta(t)}\right\}  >  \|A\|.  
\end{equation}
Let $(x,y)$ be the trajectory of the dynamical inclusion in \eqref{prec_system_simm}. We have the following:
\begin{itemize}
\item [i)] The trajectory $(x, y)$ is bounded. As a consequence, the ergodic trajectory $(\hat{x},\hat{y})$ is also bounded, and both $(x, y)$ and $(\hat{x},\hat{y})$ admit weak subsequential limit points. 
\item [ii)] For every $(u,v)\in\mathcal{X}\times\mathcal{Y}$ and for every $t\in \J$,
$$
\hat{\Delta}_{u,v}(t) \ \leq  \ \left(\int_0^t \frac{1}{\beta(s)} ds\right)^{-1} \mathcal E^S_{u,v}(0).
$$
\item [iii)] For every $(\bar{x},\bar{y})\in\mathcal{S}$ and for every $t\in \J$,
$$
\int_{0}^t \frac{d_{\bar{x}, \bar{y}}(s)}{\beta(s)}  \ ds \leq \mathcal E^S_{u,v}(0).
$$
\item [iv)] If $1/\beta \notin L^1(0,+\infty)$, then every weak  weak subsequential limit point of the ergodic trajectory $(\hat{x},\hat{y})$ is a primal-dual solution.
\item [v)] If, moreover, either $\frac{\alpha}{\beta}$ and $\frac{\delta}{\beta}$ are constant, or  there is a unique primal-dual solution, the ergodic trajectory $(\hat{x},\hat{y})$ converges weakly to a primal-dual solution.
 \end{itemize}
\end{theorem}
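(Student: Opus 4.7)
The plan centers on Lemma \ref{L:energy} applied with the parameters $\eta = 1/\beta$ and $Z = 0$. Since $\gamma = \beta$, the bracket $-\eta[(\beta - Z)\langle x-u, A^*\dot y\rangle + (\gamma + Z)\langle y-v, A\dot x\rangle]$ collapses exactly to $-\dot M_{u,v}$, while the term $-\eta Z(\beta+\gamma)\langle A\dot x, \dot y\rangle$ vanishes. Rearranging, and using the definition $\mathcal E^S_{u,v} = V_{u,v} + M_{u,v}$, yields the central identity
\begin{equation*}
\dot{\mathcal E}^S_{u,v} + \frac{\Delta_{u,v}}{\beta} + \frac{d_{u,v}}{\beta} = \frac{\dot\sigma}{2}\|x-u\|^2 + \frac{\dot\tau}{2}\|y-v\|^2,
\end{equation*}
whose right-hand side is nonpositive by the monotonicity hypotheses on $\sigma = \alpha/\beta$ and $\tau = \delta/\beta$.

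For part (i), Cauchy--Schwarz and Young's inequality give $|M_{u,v}| \le \tfrac{\|A\|}{2}(\|x-u\|^2 + \|y-v\|^2)$, and combining this with $\sigma, \tau \ge c > \|A\|$ yields the coercivity estimate $\mathcal E^S_{u,v} \ge \tfrac{c - \|A\|}{2}(\|x-u\|^2 + \|y-v\|^2) \ge 0$. Specializing to $(u,v) = (\bar x, \bar y) \in \mathcal S$ makes $\Delta_{\bar x, \bar y}$ and $d_{\bar x, \bar y}$ nonnegative, so $\mathcal E^S_{\bar x, \bar y}$ is nonincreasing, which together with coercivity forces $(x, y)$ to be bounded; the convex averages $(\hat x, \hat y)$ then inherit boundedness. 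Integrating the central identity on $[0,t]$ and using $\mathcal E^S_{u,v}(t) \ge 0$ gives $\int_0^t (\Delta_{u,v} + d_{u,v})/\beta \le \mathcal E^S_{u,v}(0)$. Part (iii) is the specialization to $(\bar x, \bar y)$ after discarding the nonnegative $\Delta_{\bar x, \bar y}/\beta$ term (with the understanding that the stated bound should read $\mathcal E^S_{\bar x, \bar y}(0)$). For (ii), Jensen's inequality applied with the probability density $\frac{1/\beta(s)}{\int_0^t 1/\beta(r)\,dr}$, together with the convexity of $L(\cdot, v)$ and concavity of $L(u, \cdot)$, yields $\hat\Delta_{u,v}(t) \le \bigl(\int_0^t 1/\beta\bigr)^{-1}\int_0^t \Delta_{u,v}/\beta$, from which the announced rate follows.

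Part (iv) is then immediate from (ii): $1/\beta \notin L^1$ forces $\limsup_{t\to\infty}\hat\Delta_{u,v}(t) \le 0$ for every $(u,v)$, and the argument in Lemma \ref{L:limitpoints} transfers verbatim to the bounded trajectory $(\hat x, \hat y)$, identifying each weak subsequential limit with an element of $\mathcal S$. For (v), the unique-solution case is immediate from (iv) together with boundedness: all weak subsequential limits of $(\hat x, \hat y)$ coincide with the unique primal-dual solution. In the case where $\sigma_0 := \alpha/\beta$ and $\tau_0 := \delta/\beta$ are constant, the time change $\tilde t = \int_0^t ds/\beta(s)$ converts the dynamics into $P_0\tilde z'(\tilde t) + M\tilde z(\tilde t) \ni 0$, with the constant symmetric preconditioner $P_0$ whose diagonal blocks are $\sigma_0 I$ and $\tau_0 I$ and whose off-diagonal blocks are $A^*$ and $A$. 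Under $c > \|A\|$, $P_0$ is positive definite, so $P_0^{-1}M$ is maximally monotone in the $P_0$-weighted inner product, and the classical Baillon ergodic theorem in that geometry gives weak convergence of the standard Ces\`aro average of $\tilde z$ to a zero of $M$; a change-of-variables verification shows this average coincides with $(\hat x, \hat y)$. The main obstacle is precisely this last step: one must carefully identify $(\hat x, \hat y)$ with the standard ergodic trajectory of the reparameterized flow, and invoke the appropriate weak ergodic convergence theorem in the non-Euclidean geometry induced by $P_0$.
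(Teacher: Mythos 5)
Your proposal is correct and follows essentially the same route as the paper: the same application of Lemma \ref{L:energy} with $Z=0$, $\eta=1/\beta$, the same Young-inequality coercivity bound on $\mathcal E^S_{u,v}$, the same integration-plus-Jensen argument for the ergodic gap, and the same time-reparameterization to a maximally monotone flow in the $Q$-weighted inner product for part (v). Your remark that the bound in (iii) should read $\mathcal E^S_{\bar x,\bar y}(0)$, and your explicit identification of $(\hat x,\hat y)$ with the uniform Ces\`aro average after the change of variables $\tilde t=\int_0^t ds/\beta(s)$, are both accurate refinements of what the paper states.
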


\begin{proof}
Young's Inequality applied to \eqref{E:parameters_symmetric} implies that, for every $(u, v)\in \mathcal{X}\times\mathcal{Y}$ and for every $t\in \J$, we have
\begin{equation}\label{E:nonneg}
    0\leq \frac{c-\norm{A}{}}{2}\left(\norm{x(t)-u}{}^2+\norm{y(t)-v}{}^2\right) \leq \mathcal E^S_{u, v}(t).
\end{equation}
Now use Lemma~\ref{L:energy} with $Z=0$, $\eta = 1/\beta$ and $\gamma=\beta$, to obtain
\begin{equation*}
    \dot V_{u,v} +\frac{1}{\beta} \Delta_{u,v} + \frac{1}{\beta} d_{u,v}= \frac{\dot\sigma}{2}\|x-u\|^2+\frac{\dot\tau}{2}\|y-v\|^2 - \big[\scal{A\dot{x}}{y-v} + \scal{A(x-u)}{\dot{y}}\big].
\end{equation*}
Since $\sigma=\frac{\alpha}{\beta}$ and $\tau=\frac{\delta}{\beta}$ are nonincreasing, it follows that 
\begin{equation} \label{E:main_inequality_symmetric} \dot{\mathcal E}^S_{u,v} + \frac{1}{\beta} \Delta_{u,v} +\frac{1}{\beta} d_{u,v} = \frac{\dot\sigma}{2}\|x-u\|^2+\frac{\dot\tau}{2}\|y-v\|^2 \leq 0.
\end{equation}   
If $(\bar{x},\bar{y})\in\mathcal{S}$, then $\Delta_{\bar{x}, \bar{y}}\geq 0$, and $d_{\bar{x}, \bar{y}} \geq 0$, whence $\dot{\mathcal E}^S_{\bar{x}, \bar{y}} \leq 0$. The quantity $\mathcal E^S_{\bar{x}, \bar{y}}(t)$ is non-increasing and, by \eqref{E:nonneg}, non-negative. Therefore, it admits a limit as $t\to +\infty$. From \eqref{E:nonneg}, it follows that $\norm{x(t)-\bar{x}}{}$ and $\norm{y(t)-\bar{y}}{}$ are bounded, which is i).

Now let $(u,v)\in\mathcal{X}\times\mathcal{Y}$.  Integrating \eqref{E:main_inequality_symmetric} on $[0,t]$, and using \eqref{E:nonneg}, we obtain
\begin{equation}\label{diseq}
\int_{0}^t\frac{1}{\beta(s)} \ \Delta_{u,v}(s) \ ds \le \mathcal E^S_{u,v}(t) + \int_{0}^t\frac{1}{\beta(s)} \ \Delta_{u,v}(s) \ ds + \int_{0}^t \frac{1}{\beta(s)} \ d_{u,v}(s) \ ds  \leq  \mathcal E^S_{u,v}(0).
\end{equation}
The convexity of the function $L(x,v)-L(u,y)$ with respect to $(x,y)$, for each fixed $(u,v)\in\mathcal{X}\times \mathcal{Y}$, together with Jensen's Inequality and \eqref{diseq}, give ii). On the other hand, setting $(u,v)=(\bar{x}, \bar{y})\in \mathcal{S}$ in \eqref{diseq}, we immediately obtain iii).

Next, if $1/\beta \notin L^1(0,+\infty)$, then $\limsup_{t\to+\infty}\hat{\Delta}_{u,v}(t)\le 0$. Pretty much in the same way as in the proof of Lemma \ref{L:limitpoints}, we show that every weak subsequential limit point of the ergodic trajectory  $(\hat{x},\hat{y})$ must belong to $\cal S$.
Finally, if $\frac{\alpha}{\beta}\equiv C_1$ and $\frac{\delta}{\beta}\equiv C_2$, system \eqref{prec_system_simm} can be rewritten as 
\begin{equation} \label{prec_system_symm_bis}
\beta (t)Q\dot{z}(t)+Mz(t) \ni 0,    
\end{equation} 
where $z=(x,y)^T$, $M$ is defined as in \eqref{eq:discr_pre}, and 
\begin{equation*}
Q=\begin{pmatrix}
    C_1 I & A^*\\ A & C_2 I,
\end{pmatrix}
\end{equation*}
is a constant symmetric preconditioning matrix, which is positive definite, in view of \eqref{eq:negder}. Reparameterize $x$, $y$ and $z$ as 
$$X(t)=x\left(\int_0^t\beta(s)\right)ds,\qquad Y(t)=x\left(\int_0^t\beta(s)\right)ds\qqbox{and}Z=(Z,Y)^T.$$
It is easy to see that $z$ is a solution of \eqref{prec_system_symm_bis} if, and only if, $Z$ is a solution of 
$$\dot Z(t)+Q^{-1}MZ(t)\ni 0.$$
The operator $Q^{-1}M$ is maximally monotone with respect to the inner product $\langle u,v\rangle_Q:=\langle Qu,v\rangle$. As a consequence, the (uniform) average of $Z$ converges weakly to a point in $\zer(Q^{-1}M)=\cal S$ (see, for instance, see \cite[Theorem 5.3]{PeySor10}). Translating this information back to $z$, we see that the ergodic trajectory $(\hat{x},\hat{y})$ converges weakly to a primal-dual solution of \eqref{opt_probl}. The other case is straightforward.
\end{proof}

\paragraph{Acknowledgments}

V. A. acknowledges financial
support by project $MIS 5154714$ of the National Recovery and Resilience Plan Greece 2.0 funded by the European Union under the NextGenerationEU Program. S. V. acknowledges the support of the European Commission (grant TraDE-OPT 861137 and grant SLING 819789), of the Ministry of Education, University and Research (grant BAC FAIR PE00000013 funded by the EU - NGEU) of the European Research Council (grant SLING 819789). S. V. and C. M. acknowledge the financial support of the US Air Force Office of Scientific Research (FA8655-22-1-7034). The research by S. V. and C. M. has been supported by the MUR Excellence Department Project awarded to Dipartimento di Matematica, Universita di Genova, CUP D33C23001110001 and MIUR (PRIN 202244A7YL).  C. M. and S. V. are members of the Gruppo Nazionale per l’Analisi Matematica, la Probabilità e le loro Applicazioni (GNAMPA) of the Istituto Nazionale di Alta Matematica (INdAM). This work represents only the view of the authors. The European Commission and the other organizations are not responsible for any use that may be made of the information it contains. J.P. acknowledges that this work benefited from the support of the FMJH Program Gaspard Monge for optimization and operations research and their interactions with data science.

\bibliographystyle{elsarticle-num-names} 
\bibliography{bibliography.bib}

\end{document}